\newcommand{\bh}[1] {\mathcal{B}(\mathcal{#1})}
\newcommand{\lspan} {\operatorname{span}}
\newcommand{\condref}[1] {\hyperref[cond.#1]{(#1)}}
\newcommand{\cB}{\mathcal{B}}
\newcommand{\cH}{\mathcal{H}}
\newcommand{\cK}{\mathcal{K}}
\newcommand{\ds}{\displaystyle}
\newcommand{\bF}{\mathbb{F}}
\newtheorem{theorem}{Theorem}[section]
\newtheorem{corollary}[theorem]{Corollary}
\newtheorem{proposition}[theorem]{Proposition}   
\newtheorem{lemma}[theorem]{Lemma}
\theoremstyle{definition}
\newtheorem{definition}[theorem]{Definition}
\newtheorem{example}[theorem]{Example}
\newtheorem{remark}[theorem]{Remark}
\numberwithin{equation}{section}
\begin{document}

\title{Poisson transforms on right-angled Artin monoids}

\date{\today}

\subjclass[2010]{47A13, 47A20, 47D03}
\keywords{Cauchy transform, Poisson transform, $*$-regular dilation, right-angled Artin monoid}

\begin{abstract}  We introduce the notion of the weak Brehmer's condition and prove that the Cauchy transform for a representation of a right-angled Artin monoid is bounded under such conditions. As a result,
we obtain the Poisson transform and $*$-regular dilation for a family of operators that satisfies the weak Brehmer's condition and the property (P). This generalizes Popescu's notion of Cauchy and Poisson transforms for commuting families of row contractions. 
\end{abstract}

\author{Boyu Li}
\address{Department of Mathematical Sciences, New Mexico State University, Las Cruces, New Mexico, 88003, USA}
\email{boyuli@nmsu.edu}
\date{\today}

\thanks{The author is supported by an NSF grant (DMS-2350543).}

\maketitle

\section{Introduction}

In \cite{Popescu1999}, Popescu introduced the Cauchy and Poisson transforms on a certain class of operators on Hilbert spaces, generalizing the earlier notion of such transforms on a single contractive operator and commuting operators \cite{CV1993, Vasilescu1992}. These transforms became an essential tool in the study of noncommutative geometry and multivariate operator theory (see \cite{AP2001, AP2000,MS2009, Popescu_2001_ProcEdin, Popescu2001, Popescu2003, Popescu_variety, Popescu_variety2, Popescu2010, BTS2006,STV2018}; and also \cite{Skalski2009, SZ2010} for generalized Poisson transform on product systems and higher rank graphs). 

The family of operators considered in \cite{Popescu1999} is closely related to representations of right-angled Artin monoids. A right-angled Artin monoid is generated by a set of generators that are either free or commuting, as dictated by the connectivity of an underlying graph. A representation of a right-angled Artin monoid is thus determined by a family of operators, where some of them are commuting. The family of operators considered in \cite{Popescu1999} consists of families of row contractions that commute with each other. This can be viewed as a representation of a right-angled Artin monoid, whose underlying graph is a complete multipartite graph. Therefore, it is natural to consider extending Popescu's result in \cite{Popescu1999} to contractive representations on general right-angled Artin monoids. 

A key consequence of Popescu's Poisson kernel is the $\ast$-regular dilation. The notion of $\ast$-regular dilation originated from Brehmer's study \cite{Brehmer1961} on dilations of commuting contractions. Recently, the theory of $\ast$-regular dilation on general semigroups was further developed in \cite{BLi2019}. It was shown in \cite{BLi2019} that having $\ast$-regular dilation is equivalent to a generalized Brehmer's condition. 
However, the proofs for $\ast$-regular dilation in \cite{Li2017, BLi2019} differ vastly from that in \cite{Popescu1999}. In \cite{Popescu1999}, it is shown that there is a Cauchy transform associated with commuting families of row contractions. Assuming the property (P), one can further define the Poisson transform, which is a unital completely positive map on a certain C*-algebra. The Stinespring dilation of this map gives the desired $\ast$-regular dilation.  

To extend Popescu's techniques to general right-angled Artin monoids, we first introduce the notion of weak Brehmer's condition on general right-angled Artin monoids (Definition~\ref{def.weak.brehmer}) that replaces the row-contraction conditions in the special case considered in \cite{Popescu1999}. Compared with Brehmer's condition in \cite{BLi2019}, the weak Brehmer's condition merely requires Brehmer's condition to hold for each subgraph corresponding to a connected component of the complement graph. 
Under this condition, we define and study the associated Cauchy transform. 
While it is natural to write down the formula for the Cauchy transform, it is unclear why such a map is bounded. 
We establish that the Cauchy transform is indeed bounded (Theorem~\ref{thm.Cauchy.bounded}), following two technical lemmas on the combinatorial property of the monoid (Lemma~\ref{lm.key.estimate} and \ref{lm.clique}). 
As a result, one can finally define the Poisson transform assuming the weak Brehmer's condition and property (P) (Theorem \ref{thm.poisson.main}). This gives another equivalence condition for $*$-regular dilation on right-angled Artin monoid (Corollary~\ref{cor.star.regular}). 
These generalized Cauchy and Poisson transforms open the door to further studies on the multivariate operator theory on right-angled Artin monoids.


\section{Right-angled Artin monoids and right LCM monoids}

Right-angled Artin monoid arises naturally in many different contexts. It is closely related to the class of Artin monoids and associated Coexter groups. One can also view a right-angled Artin monoid as a graph product of $\mathbb{N}$. As monoids, they are quasi-lattice ordered \cite{CrispLaca2002} and thus also right LCM. As an important class of quasi-lattice ordered semigroups, their semigroup C*-algebras and representations are well studied \cite{CrispLaca2002, CrispLaca2007, ELR2016, FS2022, Ivanov2010, LOS2021}.

Fix a finite simple graph $\Gamma=(V,E)$ where $V=\{1,2,\dots, n\}$ is the set of $n$ vertices and $E$ is the set of edges. The right-angled Artin monoid $A_\Gamma^+$ is defined by
\[A_\Gamma^+=\langle e_1,\dots,e_n: e_ie_j=e_je_i\ \text{ if } ij \in E\rangle.\]
In other words, each vertex $i$ of the graph $\Gamma$ corresponds to a generator $e_i$, and each edge $ij\in E$ of the graph corresponds to a commuting relation between $e_i$ and $e_j$. Note that when $ij\notin E$, the generators $e_i$ and $e_j$ are free. We use $1\in A_\Gamma^+$ to denote the identity element of the monoid. 
In an extreme case where the graph $\Gamma$ is a complete graph on $n$ vertices, the monoid $A_\Gamma^+$ is simply the free abelian monoid $\mathbb{N}^n$. On the other extreme when the graph $\Gamma$ contains no edge, $A_\Gamma^+$ is the free semigroup $F_n^+$ on $n$ generators. Therefore, one can treat a general Artin monoid as an interpolation between these two extreme cases. 

The restriction of a graph $\Gamma$ on $V'\subset V$ is defined by the graph $\Gamma|_{V'}=(V', E')$ where $E'=\{ij\in E: i,j\in V'\}$. In this case, $A_{\Gamma|_{V'}}^+$ is a submonoid of $A_\Gamma^+$.

For a graph $\Gamma=(V,E)$, we define its complement graph $\Gamma^c=(V,E^c)$ to be a graph on the same vertex set $V$ as $\Gamma$ and $E^c=\{ij: ij\notin E\}$. In other words, it consists of all those edges not in $\Gamma$. For example, if $\Gamma$ is the complete graph, then $\Gamma^c$ is the graph with no edges. Suppose $\{V_i: 1\leq i\leq k\}$ are the vertex set of all the connected components of $\Gamma^c$. Then for any vertex $j\in V_i$ and $j'\in V_{i'}$ ($i\neq i'$), $jj'\notin E^c$ since they belong to different connected components in $\Gamma^c$. This implies that $jj'\in E$ in the graph $\Gamma$. Now let $\Gamma_i=\Gamma|_{V_i}$ be the restriction of $\Gamma$ on the vertex set $V_i$. 
The generators of the submonoid $A_{\Gamma_i}^+$ commutes with those of $A_{\Gamma_{i'}}^+$ when $i\neq i'$. As a result, the right-angled artin monoid $A_\Gamma^+$ decomposes as a direct product 
\[A_\Gamma^+ = \prod_{i=1}^k A_{\Gamma_i}^+.\]
One may recall that these connected components of $\Gamma^c$ arise naturally in the study of the boundary quotient C*-algebra \cite{CrispLaca2007}. 

\begin{example}\label{ex.completeK} Let $n_1, n_2, \dots, n_k$ be positive integers. A complete $k$-partite graph $K_{n_1, n_2, \dots, n_k}$ is defined as a graph on $n=\sum_{i=1}^k n_i$ vertices, labeled by $\{v_{i,j}: 1\leq i\leq k, 1\leq j\leq n_i\}$. Its edge set consists of all  $v_{i_1, j_1}v_{i_2,j_2}$ where $i_1\neq i_2$. For example, the following diagram shows the complete $3$-partite graph $K_{2,2,1}$.
\begin{figure}[h]
    \centering

    \begin{tikzpicture}[scale=0.8, every node/.style={scale=0.8}]

    \draw[-] (0,0) -- (3,0);
    \draw[-] (0,0) -- (3,3);
    \draw[-] (0,0) -- (6,1.5);
    \draw[-] (0,3) -- (3,0);
    \draw[-] (0,3) -- (3,3);
    \draw[-] (0,3) -- (6,1.5);
    \draw[-] (3,0) -- (6,1.5);
    \draw[-] (3,3) -- (6,1.5);

    \node at (0,0){$\bullet$};
    \node at (0,3){$\bullet$};
    \node at (3,0){$\bullet$};
    \node at (3,3){$\bullet$};
    \node at (6,1.5){$\bullet$};
    
    \node at (0,-0.3){$v_{1,1}$};
    \node at (0,3.3){$v_{1,2}$};
    \node at (3,-0.3){$v_{2,1}$};
    \node at (3,3.3){$v_{2,2}$};
    \node at (6.4,1.5){$v_{3,1}$};

    \end{tikzpicture}
\end{figure}

It is easy to check that the complement graph of $K_{2,2,1}$ contains three connected components, and $A_\Gamma^+$ can be decomposed as 
\[A_\Gamma^+ = \bF_2^+\times \bF_2^+ \times \mathbb{N}.\]
\end{example}

A typical element $x\neq 1 \in A_\Gamma^+$ has the form $x=x_1 x_2 \cdots x_m$, where each $x_j=e_{i_j}^{a_j}$ for some $i_j\in\{1,2,\dots,n\}$ and $a_i\geq 1$. 
Since the only relations for $A_\Gamma^+$ are the commutation between some pairs of $e_i$ and $e_j$, one can define the \emph{norm} of $x$ to be $|x|=\sum_{j=1}^m a_j$. It is clear that $|xy|=|x|+|y|$, and by convention, we set $|1|=0$.
The $x_i$'s are often called syllables of $x$. We define $I(x_j)=i_j$ to denote the corresponding vertex for the $j$-th syllable. 
An expression for $x\in A_\Gamma^+$ is not unique in general. If ${i_j}$ and ${i_{j+1}}$ are two adjacent vertices, $x_{j}=e_{i_j}^{a_j}$ commutes with $x_{{j+1}}=e_{i_{j+1}}^{a_{j+1}}$ so that one can switch the positions of $x_j$ with $x_{j+1}$ to write  $x=x_1\cdots x_{i-1} x_{i+1} x_i x_{i+2} \cdots x_n$. This is called a \emph{shuffle} of the expression. If ${i_j}={i_{j+1}}$, then one can merge the two syllables as one by $x_jx_{j+1}=e_{i_j}^{a_{i_j}+a_{i_{j+1}}}$. This is called an \emph{amalgamation} of the expression. An expression is called \emph{reduced} if it cannot be shuffled to an expression that allows an amalgamation. A result of Green \cite{Green1990} states that reduced expressions are shuffle equivalent. As a result, the total number of syllables used in a reduced expression is a fixed number for each element in $A_\Gamma^+$. This is defined as the length of the element $x$, denoted by $\ell(x)$. We set $\ell(1)=0$ by convention.  
One should note that the difference between $\ell(x)$ and $|x|$ is that the first counts the number of syllables while the latter counts the number of generators. 

A vertex $i$ is called an \emph{initial vertex} of $x$ if there is a reduced expression of $x=x_1x_2\cdots x_m$ where $x_1=e_i^{a_1}$, $a_1\geq 1$. Here, $x_1=e_i^{a_i}$ is called an \emph{initial syllable}. In other words, a syllable is an initial syllable if we can shuffle it to the front. Similarly, if there is a reduced expression of $x=x_1x_2\cdots x_m$ where $x_m=e_i^{a_m}$, $a_m\geq 1$, then $x_m$ is called a \emph{final syllable} and its corresponding vertex $I(x_m)$ is called a \emph{final vertex}. 



Right-angled Artin monoids belong to the class of quasilattice ordered semigroups \cite{CrispLaca2002}, and more generally, right LCM monoids. Here, we briefly review the basics of right LCM monoids. 

A semigroup is a set equipped with an associative binary map called the multiplication map, and a monoid is a semigroup with an identity element.
Given a monoid $P$, we say $x\leq y$ if $y=xp$ for some $p\in P$. A monoid $P$ is called \emph{right LCM} if for any $p,q\in P$, either $pP\cap qP=\emptyset$ or $pP\cap qP=r P$ for some $r\in P$. When the latter happens, $r=pp'=qq'$ can be viewed as a least common multiple in $P$, and we denote it by $p\vee q=r$. The choice of $r$ is unique when $P$ contains no invertible elements other than the identity, which is a property that $A_\Gamma^+$ satisfies. 
When $pP\cap qP=\emptyset$, we often write $p\vee q=\infty$. For any finite subset $F\subset P$, one can repeatedly apply the right LCM condition to see that either $\bigcap_{p\in F} pP=\emptyset$ or $\bigcap_{p\in F} pP=rP$ for some $r\in P$. Again, we write $\vee F=\infty$ or $\vee F=r$ to denote these two cases respectively. 

The following Lemma gives a necessary condition for $\vee F\neq\infty$ where $F\subset A_\Gamma^+$ is finite.

\begin{lemma}\label{lm.upper.bound} Let $F\subset A_\Gamma^+$ be a finite subset and suppose $\vee F\neq\infty$. Suppose $I$ is an initial vertex for some $x\in F$. Then for each $y\in F$, either $I$ is an initial vertex of $y$ or $I$ is adjacent to every vertex of syllables of $y$. 
\end{lemma}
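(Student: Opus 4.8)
The plan is to reduce the statement to a divisibility fact about the single element $r:=\vee F$ and then to read off the dichotomy from a word-level criterion for left-divisibility by a generator. Since $\vee F\neq\infty$, set $r=\vee F\in A_\Gamma^+$, so that $z\le r$ for every $z\in F$. Recall that a vertex $i$ is an initial vertex of $w\in A_\Gamma^+$ precisely when $e_i\le w$; in particular, since $I$ is an initial vertex of $x$ we get $e_I\le x\le r$, hence $e_I\le r$. Thus it suffices to prove: if $y\in A_\Gamma^+$ satisfies $y\le r$ while $e_I\le r$, then either $e_I\le y$ or $I$ is adjacent to every vertex of every syllable of $y$.

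The one nontrivial ingredient I would isolate first is the following criterion. Viewing elements of $A_\Gamma^+$ as represented by words in the generators $e_1,\dots,e_n$, the defining presentation says that two words represent the same element exactly when one is obtained from the other by a sequence of transpositions of adjacent commuting generators. I claim that, for $w\in A_\Gamma^+$ and any word $g_1g_2\cdots g_N$ representing it, one has $e_i\le w$ if and only if the first occurrence of $e_i$ in that word (if one exists) is preceded only by generators that commute with $e_i$, i.e.\ whose vertices are adjacent to $i$. One direction is immediate: shuffle that occurrence to the front. For the converse I would verify that this property of a word is preserved under a single transposition of adjacent commuting generators — a short case split on whether the transposed pair lies entirely before the first $e_i$, straddles it, or lies entirely after it — and note that it holds trivially for any word of the shape $e_i w'$; since $e_i\le w$ exactly means $w=e_i w'$ for some $w'$, the property then holds for every word representing $w$.

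Granting this, the conclusion is quick. Write $r=yt$ with $t\in A_\Gamma^+$, pick a word $P$ representing $y$ and a word $Q$ representing $t$, so $PQ$ represents $r$. Since $e_I\le r$, the criterion applied to $PQ$ shows the first occurrence of $e_I$ in $PQ$ is preceded only by generators adjacent to $I$. If that occurrence lies inside $P$, then, read inside $P$ alone, it witnesses $e_I\le y$ (equivalently, $I$ is an initial vertex of $y$), the first alternative. Otherwise it lies inside $Q$, so $e_I$ does not occur in $P$ at all, while every generator of $P$ precedes that occurrence and hence commutes with $e_I$. Since the set of generators occurring in a word is an invariant of the element — again by the presentation — every vertex of every syllable of $y$ is then distinct from $I$ and adjacent to $I$, the second alternative.

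The reduction and the final deduction are routine bookkeeping; the only point that needs genuine care is the divisibility criterion, and specifically the claim that the word property just described is invariant under the elementary commutation moves. I expect that (entirely elementary) case analysis to be the main thing one actually has to write out.
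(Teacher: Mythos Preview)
Your proof is correct and follows essentially the same route as the paper: both arguments pass to $r=\vee F$, note that $I$ is an initial vertex of $r$, write $r=y\cdot(\text{something})$, and conclude that if no initial $e_I$ comes from $y$ then the $e_I$ that can be shuffled to the front of $r$ must commute past every syllable of $y$. The only difference is that you explicitly isolate and justify the word-level ``divisibility criterion'' (invariance under elementary commutations), whereas the paper leaves this step implicit; one small wording fix---your biconditional should require that $e_i$ actually occurs in the word, since otherwise the right-hand side is vacuously true while $e_i\le w$ fails---but your application only uses the criterion in cases where the occurrence exists, so the argument is sound.
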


\begin{proof} Suppose $I$ is not an initial vertex for some $y\in F$. Let $r=\vee F$ and we have $r=xx'=yy'$ for some $x', y'\in A_\Gamma^+$. Since $I$ is an initial vertex of $x$, it is also an initial vertex of $r$. 

Now $r=yy'$ but $I$ is not an initial vertex of $y$. Therefore, the initial syllable for $I$ in $r$ must come from $y'$. In order to shuffle $e_I$ from $y'$ to the front of $r=yy'$, $e_I$ must commute with every syllable of $y$ and therefore $I$ is adjacent to every vertex of $y$. 
\end{proof}

Finally, recall that a subset of vertices $W$ in a graph $\Gamma$ is called a clique if all the vertices in $W$ are adjacent. A finite subset $U\subset F=\{e_1,\dots,e_n\}$ has $\vee U<\infty$ if and only if $U$ corresponds to a clique in $\Gamma$, in which case, $\vee U=\prod_{e_i\in U} e_i$.

\section{Brehmer conditions and regular dilation}

We now turn our attention to representations of right-angled Artin monoids $A_\Gamma^+$. A representation $T$ of the monoid $A_\Gamma^+$ is uniquely determined by its values on the set of generators, denoted by $T_j=T(e_j)$. The family of operators $\{T_j\}$ satisfies $T_i, T_j$ commute whenever $ij$ is an edge in $\Gamma$. Conversely, any such family defines a representation of the monoid $A_\Gamma^+$. Therefore, one can also approach the representations of $A_\Gamma^+$ from a multivariate operator perspective. We call a family of contractive operators $\{T_j\}_{j=1}^n$ a $\Gamma$-family if $T_i$ commutes with $T_j$ whenever $ij$ is an edge in $\Gamma$. We call it isometric if each $T_j$ is an isometry. 

We call an isometric $\Gamma$-family $\{V_j\}$ Nica-covariant if 
\[V_i^* V_j=
\begin{cases}
    V_jV_i^*, &\text{ if } ij\in E, \\
    0, &\text{ if }ij\notin E.
\end{cases}
\] 
We note that the Nica-covariant condition was first defined on general quasilattice ordered semigroups in \cite{Nica1992}. Our definition of Nica-covariance coincides with Nica's original definition due to \cite[Theorem 24]{CrispLaca2002}. 

The archetypal isometric Nica-covariant representation is the left-regular representation, denoted by $\lambda: A_\Gamma^+\to \cB(\ell^2(A_\Gamma^+))$. Here, fix an orthonormal basis $\{e_q: q\in A_\Gamma^+\}$, each $\lambda_p$ acts as a left shift operator by $\lambda_p e_q=e_{pq}$. One can verify that $\lambda$ is Nica-covariant. The C*-algebra generated by the left-regular representation $\lambda$ is called the \emph{reduced semigroup C*-algebra} of the monoid $A_\Gamma^+$, denoted by $C^*_\lambda(A_\Gamma^+)=C^*(\{\lambda_p: p\in A_\Gamma^+\})\subset\cB(\ell^2(A_\Gamma^+))$. From the Nica-covariance condition, 
\[\lambda_p^* \lambda_q = \begin{cases}
    \lambda_{p^{-1}r} \lambda_{q^{-1}r}^*, &\text{if } p\vee q=r\neq\infty,\\
    0, &\text{otherwise}. 
    \end{cases}\]
Therefore, it is easy to check that $C_\lambda^*(A_\Gamma^+)=\overline{\operatorname{span}}\{\lambda_p \lambda_q^*: p,q\in A_\Gamma^+\}$. 

Given a contractive representation $T:A_\Gamma^+\to\bh{H}$, we say $V:A_\Gamma^+\to\bh{K}$ is a dilation of a contractive representation $T$ if for any $p\in A_\Gamma^+$, 
\[P_\cH V_p |_\cH = T_p.\]
This dilation is called minimal if $\cK=\bigvee_{p\in A_\Gamma^+} V_p\cH$. 

We say $V$ is a $*$-regular dilation of  $T$ if $\cH\subset\cK$ and for any $p,q\in A_\Gamma^+$,
\[P_\cH V_p^* V_q|_\cH = \begin{cases}
    T_{p^{-1}r} T_{q^{-1}r}^*, &\text{if } p\vee q=r\neq\infty,\\
    0, &\text{otherwise}. 
    \end{cases}\]
Notice that any $*$-regular dilation is also a dilation by setting $p=1$. 

The notion of $*$-regular dilation was first introduced by Brehmer \cite{Brehmer1961} for $n$ commuting contractions. It is observed in \cite[Theorem 3.9]{BLi2019} (also \cite{Li2017} for $A_\Gamma^+$) that having $*$-regular dilation is equivalent to having a minimal isometric Nica-covariant dilation, which is also equivalent to a generalized Brehmer's condition.

\begin{theorem}\label{thm.regular.LCM} Let $\Gamma$ be a graph on $n$ vertices and $T:A_\Gamma^+\to\bh{H}$ be a unital representation of $A_\Gamma^+$. Then the following are equivalent. 
\begin{enumerate}[label=\textup{(\arabic*)}]
    \item $T$ has $*$-regular dilation.
    \item $T$ has an isometric Nica-covariant dilation.
\end{enumerate}
\begin{enumerate}[label=\textup{(B)}]
    \item\label{cond.Brehmer.Artin} For each clique $W$ in $\Gamma$
\[
        Z({N_\Gamma(W))} = \sum_{U\subseteq N_\Gamma(W)} (-1)^{|U|} T_{\vee U} T_{\vee U}^* \geq 0.
\]
    Here, 
    \[N_\Gamma(W)=\{e_i \in\Gamma: ij\in E \text{ for all }j\in W\},\]
    and $T_{\vee U}=0$ if $\vee U=\infty$.
\end{enumerate}
\end{theorem}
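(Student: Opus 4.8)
This theorem collects results that are essentially known, so my plan is to assemble them rather than re-derive everything from scratch. The two inputs are \cite[Theorem~3.9]{BLi2019}, which establishes $(1)\Leftrightarrow(2)$ and their equivalence with an abstract Brehmer-type positivity for \emph{every} right LCM monoid, and \cite{Li2017}, which carries out the specialization to $A_\Gamma^+$. Accordingly I would split the argument into (a) the equivalence $(1)\Leftrightarrow(2)$ --- equivalently, the existence of a \emph{minimal} isometric Nica-covariant dilation --- and (b) the identification of the abstract Brehmer condition with the clique-indexed family (B).

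For (a) I would cite \cite[Theorem~3.9]{BLi2019}, the mechanism being as follows. Passing to the minimal reducing subspace $\bigvee_p V_p\cH$ at the outset is harmless, since it leaves every compression $P_\cH V_p^*V_q|_\cH$ unchanged, so we may assume the dilation $V$ on $\cK\supseteq\cH$ is minimal. A minimal $*$-regular dilation is automatically isometric and Nica-covariant: taking $q=p$ in the defining identity and using $T_1=I$ gives $P_\cH V_p^*V_p|_\cH=I$, and minimality propagates this to $V_p^*V_p=I$ on all of $\cK$, while the off-diagonal relations come out by a similar compression. Conversely, a minimal isometric Nica-covariant dilation $V$ satisfies $V_p^*V_q=V_{p^{-1}r}V_{q^{-1}r}^*$ when $p\vee q=r\neq\infty$ and $V_p^*V_q=0$ otherwise --- the same identity proved for $\lambda$ in Section~3 --- and compressing this to $\cH$ reproduces the $*$-regular dilation identity.

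For (b), after restriction to $A_\Gamma^+$ the abstract Brehmer condition becomes the positivity of inclusion--exclusion sums $\sum_{U}(-1)^{|U|}T_{\vee U}T_{\vee U}^*$ attached to finite sets of generators, and by the last remark of Section~2 every term with $U$ not a clique drops out, since then $\vee U=\infty$ and $T_{\vee U}=0$. Each $Z(N_\Gamma(W))$ appearing in (B) is itself such a sum --- the one attached to $N_\Gamma(W)$ --- so that implication is immediate; the content, and where I expect the real obstacle, is the converse: deducing positivity of \emph{all} such sums from the clique-indexed family (B). I would follow \cite{Li2017}: factor the problem over commuting blocks using $A_\Gamma^+=\prod_{i=1}^k A_{\Gamma_i}^+$ along the connected components of $\Gamma^c$; invoke Lemma~\ref{lm.upper.bound} to show that in any finite set with finite join the generators cluster around a common clique $W$, with the remaining ones lying in $N_\Gamma(W)$; and then induct on $n$, rearranging each large alternating sum by inclusion--exclusion over the subsets of $N_\Gamma(W)$ until only the terms of the $Z(N_\Gamma(W))$'s remain. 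Everything outside this combinatorial reduction is routine bookkeeping or citation.
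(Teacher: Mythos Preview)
Your proposal is correct and matches the paper's own treatment: the paper does not supply a self-contained proof but states the theorem as a known result, citing \cite[Theorem~3.9]{BLi2019} for the general right LCM equivalence and \cite{Li2017} for the specialization to $A_\Gamma^+$, with a subsequent remark noting that the proof in \cite[Theorem~4.13 and Proposition~6.14]{Li2017} only ever uses subsets of the form $N_\Gamma(W)$ for cliques $W$. Your plan to assemble the argument from these same references, and your identification of the clique reduction as the substantive step, is exactly the paper's perspective; the only minor divergence is that the paper sidesteps proving (B)$\Rightarrow$(B$'$) directly and instead observes that the dilation construction in \cite{Li2017} never needs the full (B$'$).
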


The relation between $\ast$-regular dilation and Brehmer's Condition was further developed in \cite{BLi2019}. It is shown that for a contractive representation $T$ on a general right LCM semigroup $P$, having $\ast$-regular dilation is equivalent to the generalized Brehmer's Condition:
\begin{enumerate}[label=\textup{(B')}]
    \item\label{cond.Brehmer} For any finite $F\subset P$, 
    \[
        Z(F) = \sum_{U\subseteq F} (-1)^{|U|} T_{\vee U} T_{\vee U}^* \geq 0.
    \]
\end{enumerate}
In the case of the Artin monoid, Condition~\ref{cond.Brehmer.Artin} and \ref{cond.Brehmer} are equivalent.

\begin{remark} We would like to point out that 
the original Condition~\ref{cond.Brehmer.Artin} in \cite{Li2017} requires $Z(F)\geq 0$ for all finite $F\subset \{e_1,\dots,e_n\}$. 
However, the proof only requires checking finite subset $F$ of the form $\{e_i: i\in N_\Gamma(W)\}$ where $W$ is a clique in $\Gamma$. One can see this by following the proof outlined in \cite[Theorem 4.13 and Proposition 6.14]{Li2017}. 

We also note that if $W$ is a maximal clique, meaning it is not contained in any larger clique. Then $N_\Gamma(W)=\emptyset$, in which case, $Z(N_\Gamma(W))=I$ is trivially positive. Therefore, the Condition~\ref{cond.Brehmer.Artin} can be further reduced to checking non-maximal cliques $W$ in $\Gamma$. 

For example, when $\Gamma$ is the empty graph on $n$ vertices, the cliques of $\Gamma$ consist of $\emptyset$ and all singleton sets $\{i\}$. The only condition required would be $Z(N_\Gamma(\emptyset))\geq 0$, which is the row contraction condition in this case. 
\end{remark}

In \cite{Popescu1999}, Popescu showed that certain families of contractions have $*$-regular dilation. 
Let $\Gamma$ be a complete $k$-multipartite graph $K_{n_1, \dots,n_k}$. A $\Gamma$-family is given by $\{T_{i,j}: 1\leq i\leq k, 1\leq j\leq n_i\}$, where $T_{i,j}$ commutes with $T_{i',j'}$ whenever $i\neq i'$. Popescu considerd such families $\{T_{i,j}\}$ where $\{T_{i,j}: 1\leq j\leq n_i\}$ is a row contraction for each $i$. It is shown in \cite{Popescu1999} that such $\{T_{i,j}\}$ has a $*$-regular dilation if it satisfies the property (P). Details of property (P) will be discussed in Section~\ref{sec.Poisson}.  

We note that without the property (P), merely requiring row contractiveness is weaker than Brehmer's Condition~\ref{cond.Brehmer.Artin}. For example, consider the case when $\Gamma$ is a complete graph $K_n$ on $n$ vertices. A $\Gamma$-family consists of $n$ commuting contractions $\{T_j: 1\leq j\leq n\}$. Popescu proved that $\{T_j\}$ has a $*$-regular dilation if it satisfies property (P). However, without the property (P), a family of $n$ commuting contraction may not even have isometric dilation. 

Therefore, it seems that Brehmer's Condition (Condition~\ref{cond.Brehmer.Artin}) is equivalent to the property (P) plus a weaker version of Condition~\ref{cond.Brehmer.Artin}. Attempts were made in \cite{Li2017} to explore their connection, but it was unclear how to replace the row contractive condition in a general right-angled Artin monoid. 

We now introduce the notion of weak Brehmer's condition, which replaces the row contraction condition. 

\begin{definition}\label{def.weak.brehmer}  We say a $\Gamma$ family $\{T_j\}_{j=1}^n$ satisfies the weak Brehmer's condition if for each subgraph $\Gamma_i$ of $\Gamma$ corresponding to a connected component of $\Gamma^c$, we have $\{T_j: j\in \Gamma_i\}$ satisfies Condition~\ref{cond.Brehmer.Artin} as a $\Gamma_i$-family. 
\end{definition}

\begin{example} Consider the case when $\Gamma=K_{n_1,n_2,\dots,n_k}$ with vertex set $V=\{v_{i,j}: 1\leq i\leq k, 1\leq j\leq n_i\}$ as considered in Example~\ref{ex.completeK}. For each $i$, the vertices $V_i=\{v_{i,j}: 1\leq j\leq n_i\}$ correspond to a connected component of $\Gamma^c$. The restriction $\Gamma_i=\Gamma|_{V_i}$ is an empty graph on $n_i$ vertices. Therefore, the weak Brehmer's condition requires that $\{T_{i,j}: 1\leq j\leq n_i\}$ forms a row contraction for each $i$. This is precisely the family of operators considered in \cite{Popescu1999}. 
\end{example}

\begin{example}\label{ex.toy.graph}
Let $\Gamma$ be the following graph. 
\begin{figure}[h]
    \centering

    \begin{tikzpicture}[scale=0.8, every node/.style={scale=0.8}]

	\draw[-] (0,0) -- (0,3);
    \draw[-] (0,3) -- (3,0);
    \draw[-] (3,3) -- (3,0);
	\draw[-] (0,0) -- (3,0);
    
    \node at (0,0){$\bullet$};
    \node at (0,3){$\bullet$};
    \node at (3,0){$\bullet$};
    \node at (3,3){$\bullet$};
    
    \node at (-0.2,0){$1$};
    \node at (-0.2,3){$2$};
    \node at (3.2,0){$4$};
    \node at (3.2,3){$3$};

    \end{tikzpicture}
\end{figure}

A $\Gamma$-family of $A_\Gamma^+$ consists of $4$ contractions $\{T_j\}_{j=1}^4$ where $T_1, T_2, T_4$ commute with each other and $T_3$ only commutes with $T_4$. The complement graph $\Gamma^c$ has two connected components: $\Gamma_1$ on vertices $\{1,2,3\}$ and $\Gamma_2$ on vertices $\{4\}$. The weak Brehmer's condition requires the following conditions:
\begin{align*}
    I-T_1T_1^*-T_2T_2^*-T_3T_3^*+T_{12}T_{12}^* \geq 0 & \text{ for }W=\emptyset\subset\Gamma_1, N_{\Gamma_1}(W)=\{1,2,3\},\\ 
    I-T_2T_2^* \geq 0 & \text{ for }W=\{1\}\subset\Gamma_1, N_{\Gamma_1}(W)=\{2\},\\ 
    I-T_1T_1^* \geq 0 & \text{ for }W=\{2\}\subset\Gamma_1, N_{\Gamma_1}(W)=\{1\},\\ 
    I-T_4T_4^* \geq 0 & \text{ for }W=\emptyset\subset\Gamma_2, N_{\Gamma_2
    }(W)=\{3\}.
\end{align*}
Since $I-T_iT_i^*\geq 0$ follows from the contractive condition, the weak Brehmer's condition only requires the $\Gamma$-family to satisfy the first inequality from the list above. 
\end{example}

\section{Cauchy transform}

Let $A_\Gamma^+$ be a right-angled Artin monoid associated with the graph $\Gamma$ on $n$ vertices. Let $\{T_j\}_{j=1}^n$ be a $\Gamma$-family that satisfies the weak Brehmer's condition. 
Fix a number $0\leq r<1$, we define the \emph{Cauchy transform} to be the map
\[C_{r,T} : \cH \to \ell^2(A_\Gamma^+)\otimes \cH,\]
by 
\[C_{r,T} h := \sum_{p\in A_\Gamma^+} e_p \otimes r^{|p|} T_p^* h.\]
Here, $\{e_p: p\in A_\Gamma^+\}$ is the canonical orthonormal basis for $\ell^2(A_\Gamma^+)$. 

When $A_\Gamma^+$ is a complete $k$-bipartite graph with $k$ independent sets of sizes $n_1, n_2, \dots, n_k$, the monoid $A_\Gamma^+$ can be viewed as a direct product of free semigroups, 
\[A_\Gamma^+ = \prod_{i=1}^k \bF_{n_i}^+.\]
In particular, the space $\ell^2(A_\Gamma^+)$ is isomorphic to $\bigotimes_{i=1}^k F^2(H_{n_i})$, where $H_{n_i}\cong \mathbb{C}^{n_i}$ and $F^2(H_{n_i})\cong \ell^2(\bF_{n_i}^+)$ is the full Fock space. In this case, the map $C_{r,T}$ is precisely the Cauchy transform defined in \cite{Popescu1999}. 

It is unclear a priori why the map $C_{r,T}$ is bounded, and showing $C_{r,T}$ is bounded for all $0\leq r<1$ is the main goal of this section. In particular, we need the convergence when $r$ is close to $1$ since we are going to take $r\to 1$ eventually. 

We first note that
\[\|C_{r,T} h\|^2 = \sum_{p\in A_\Gamma^+} r^{2|p|} \|T_p^* h\|^2 =\sum_{m=0}^\infty \left(\sum_{|p|=m} \|T_p^* h\|^2\right) r^{2m}\]
One naive estimate is to use $\|T_p\|\leq 1$ and $\|T^*_ph\|^2 \leq \|h\|^2$ to get:
\[\sum_{|p|=m} \|T_p^* h\|^2 \leq \left|\left\{p\in A_\Gamma^+: |p|=m\right\}\right| \cdot \|h\|^2.\]
However, this estimate is too loose. For example, in the case when $\Gamma$ is an empty graph on $2$ vertices, $\left|\left\{p\in A_\Gamma^+: |p|=m\right\}\right|=2^m$, and 
above estimate gives
\[\|C_{r,T} h\|^2 \leq \sum_{m=0}^\infty 2^m r^{2m} \|h\|^2.\]
The right-hand side converges only when $0 < r^2 < \frac{1}{2}$. 

Therefore, one needs a tighter estimate on $\sum_{|p|=m} \|T_p^* h\|^2$. In \cite{Popescu1999}, the proof that $C_{r,T}$ is bounded relies heavily on the fact that the operators $\{T_j\}$ consist of commuting row contractions. However, we need a new technique using the weak Brehmer's condition.

We start with the following technical lemma.

\begin{lemma}\label{lm.key.estimate} Let $P$ be a right LCM monoid. Let $T:P\to\bh{H}$ be a contractive representation that satisfies Condition~\ref{cond.Brehmer}. Let $F\subset P$ be a finite set. Then for any $h\in \cH$,
\begin{equation}\tag{$\lozenge$}\label{eq.diamond}
    \sum_{p\in F} \|T_p^* h\|^2  \leq c_F \|h\|^2.
\end{equation}

where $c_F=\max\{\ell: U\subset F, |U|=\ell, \vee U\neq \infty\}$. In other words, $c_F$ is the largest size of subsets of $F$ that have a common upper bound in $P$. 
\end{lemma}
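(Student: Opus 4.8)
The plan is to deduce the scalar inequality $\sum_{p\in F}\|T_p^*h\|^2 \le c_F\|h\|^2$ from the operator inequality one obtains by inclusion–exclusion, using Condition~\ref{cond.Brehmer} applied to suitable subsets of $F$. The starting point is the observation that $\sum_{p\in F}\|T_p^*h\|^2 = \langle (\sum_{p\in F} T_pT_p^*) h, h\rangle$, so it suffices to show the operator inequality $\sum_{p\in F} T_pT_p^* \le c_F\cdot I$. Now $\sum_{p\in F} T_pT_p^*$ is (up to sign conventions) the "$|U|=1$" part of $Z(F) = \sum_{U\subseteq F}(-1)^{|U|}T_{\vee U}T_{\vee U}^*$. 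The idea is to rearrange: write $c_F\cdot I - \sum_{p\in F}T_pT_p^*$ as a nonnegative combination of the quantities $Z(G)$ (and their analogues) over subsets $G\subseteq F$, which are all $\ge 0$ by Condition~\ref{cond.Brehmer}.

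More concretely, I would proceed by induction on $|F|$, or by an explicit alternating-sum identity. Note first that if $\vee U = \infty$ then $T_{\vee U} = 0$ by convention, so in $Z(F)$ only subsets $U$ with a common upper bound contribute; in particular every contributing $U$ has $|U| \le c_F$. Set $n = |F|$ and group the terms of $Z(F)$ by cardinality: $Z(F) = I - \sum_{p\in F}T_pT_p^* + \sum_{|U|=2}T_{\vee U}T_{\vee U}^* - \cdots$. Rearranging gives
\[
\sum_{p\in F} T_pT_p^* = I - Z(F) + \sum_{k\ge 2}(-1)^k \sum_{|U|=k}T_{\vee U}T_{\vee U}^*.
\]
The terms with $k\ge 2$ need to be controlled. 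The cleanest route is the following telescoping: for each $j\ge 1$ define the truncated sums and use that $\sum_{U\subseteq F,\, \vee U\ne\infty}(-1)^{|U|}$-type identities, combined with $Z(G)\ge 0$ for all $G\subseteq F$, force the partial sums to bracket the target. A convenient packaging: for any $0\le m\le n$, the operator
\[
S_m := \sum_{k=0}^{m}(-1)^k\sum_{|U|=k}T_{\vee U}T_{\vee U}^*
\]
satisfies $S_m \ge 0$ when $m$ is even and $S_m \le I$ when... — the correct inequalities are obtained by writing $S_m$ as a signed sum of $Z(G)$'s. Specifically, $I - S_1 = \sum_{p\in F}T_pT_p^* - (\text{higher order})$, and one shows inductively (peeling off one element $q\in F$ at a time and using $Z(F) = Z(F\setminus\{q\}) - T_q^*(\,\cdots)$-type recursions, i.e. the standard identity $Z(F) = Z(F\setminus\{q\}) - \sum_{U\subseteq F\setminus\{q\}}(-1)^{|U|}T_{\vee(U\cup\{q\})}T_{\vee(U\cup\{q\})}^*$) that $0 \le I - \sum_{p\in F}T_pT_p^* + (\text{stuff bounded by } (c_F-1)I)$.

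I expect the main obstacle to be bookkeeping: getting the combinatorial coefficients right so that the leftover higher-order terms are exactly absorbed into $(c_F-1)I$ rather than something larger. The clean way to handle this is to induct on $c_F$ (equivalently, stratify $F$): if $c_F = 1$, then $\vee U = \infty$ for all $U\subseteq F$ with $|U|\ge 2$, so $Z(F) = I - \sum_{p\in F}T_pT_p^* \ge 0$ gives the bound immediately. For the inductive step, pick any $q\in F$; the subsets of $F$ containing $q$ with a common upper bound all lie inside $\{q\}\cup N(q)$-type sets and, crucially, any such family together with $q$ still has $\vee$-finite, so the "link of $q$" has its own parameter at most $c_F - 1$. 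Writing $\sum_{p\in F}T_pT_p^* = T_qT_q^* + \sum_{p\in F\setminus\{q\}}T_pT_p^*$, bounding the second sum by $c_{F\setminus\{q\}}I \le c_F I$ by induction on $|F|$, and bounding $T_qT_q^* \le I$ by contractivity would only give $c_F + 1$, so this naive split loses a factor — which is exactly why one must instead use $Z(F)\ge 0$ to convert the "$+T_qT_q^*$" into a saving. The honest argument is therefore: induct on $|F|$ with the stronger hypothesis that $c_F I - \sum_{p\in F}T_pT_p^* = Z(F) + R_F$ where $R_F\ge 0$ is an explicit nonnegative combination of $Z(G)$'s over proper subsets $G\subsetneq F$ (with total "weight" $c_F - 1$), and verify the recursion $R_F = R_{F\setminus\{q\}} + (\text{correction})$ using the standard deletion identity for $Z$; the nonnegativity of each $Z(G)$ is supplied by Condition~\ref{cond.Brehmer}, and the weight count is where the constant $c_F$ emerges exactly.
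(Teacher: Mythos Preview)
Your general strategy is right: one wants an identity $c_F\, I - \sum_{p\in F} T_pT_p^* = \sum_\alpha Z(G_\alpha)$ with each $Z(G_\alpha)\ge 0$ supplied by Condition~\ref{cond.Brehmer}. But the proposal never pins down the sets $G_\alpha$, and the specific schemes you outline cannot close. You repeatedly suggest taking each $G_\alpha$ to be a \emph{subset of $F$}; this already fails for $F=\{p,q\}$ with $p\vee q\neq\infty$. Here $c_F=2$ and
\[
2I - T_pT_p^* - T_qT_q^* \;=\; Z(F) + \bigl(I - T_{p\vee q}T_{p\vee q}^*\bigr),
\]
but $I - T_{p\vee q}T_{p\vee q}^*$ is not a combination of $Z(G)$ over $G\subseteq F$, since $p\vee q\notin F$ in general. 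Consequently the induction you sketch at the end (peel off $q\in F$, express $R_F$ as a nonnegative combination of $Z(G)$ for $G\subsetneq F$) cannot produce the required remainder term. The various other threads in the proposal (the truncated sums $S_m$, the ``link of $q$'' idea) are left as unverified heuristics and, as you yourself note, the naive split loses a factor.

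The paper's proof supplies precisely the missing construction: it applies Condition~\ref{cond.Brehmer} not to subsets of $F$ but to the derived sets
\[
F_k := \{\vee U : U\subseteq F,\ |U|=k\}, \qquad 1\le k\le c_F,
\]
and establishes the exact identity $c_F\, I - \sum_{p\in F}T_pT_p^* = \sum_{k=1}^{c_F} Z(F_k)$. In the two-element example above this is just $Z(F_1)+Z(F_2)$ with $F_2=\{p\vee q\}$, which explains where the term $I-T_{p\vee q}T_{p\vee q}^*$ comes from. The verification of the identity is a direct combinatorial computation: for each $U\subseteq F$ with $\vee U\neq\infty$ one tracks the coefficient of $T_{\vee U}T_{\vee U}^*$ on the right-hand side, reduces it via inclusion--exclusion to a double binomial sum, and shows it equals $-1$ when $|U|=1$ and $0$ when $|U|\ge 2$. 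There is no induction on $|F|$ or on $c_F$; the entire argument is this single identity together with the positivity of each $Z(F_k)$.
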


\begin{proof}
Let $F_k=\{\vee U: U\subset F, |U|=k\}$. We note that $F_1=F$ and $F_k=\emptyset$ for all $k>c_F$ by the definition of $c_F$. We note that $F_k$ may contain multiple copies of an element because different $U$ may generate the same element $\vee U$. This will not affect any of the computations. 

By Condition~\ref{cond.Brehmer}, $Z(F_k)\geq 0$ for all $1\leq k \leq c_F$, where
\[Z(F_k)=\sum_{U\subset F_k} (-1)^{|U|} T_{\vee U} T_{\vee U}^*.\]

We now claim that
\begin{equation*}
    c_F I - \sum_{p\in F} T_p T_p^* = \sum_{k=1}^{c_F} Z(F_k).
\end{equation*}
The desired result would follow easily from this claim since the right-hand side is positive and
\[\sum_{p\in F} \|T_p^* h\|^2 = \left\langle \left(\sum_{p\in F} T_p T_p^*\right) h, h\right\rangle \leq c_F \|h\|^2.\]

To prove this claim, we first notice that $\bigvee_{i}(\vee U_i)=\vee(\cup_i U_i)$. Therefore, all the terms in $Z(F_k)$ have the form $T_{\vee U} T_{\vee U}^*$ for some $U\subset F$. For each $\emptyset\neq U\subset F$ where $\vee U\neq\infty$, let us keep track of the term $T_{\vee U} T_{\vee U}^*$ in each of $Z(F_k)$, $k\leq |U|$. Each term for $\vee U$ in $Z(F_k)$ comes from $\vee U=\vee_{i=1}^m (\vee U_i)$ for some collection of $m$ elements $\{\vee U_i\}_{i=1}^m$ from $F_k$. This requires that $U_i$ are distinct subsets, each with $k$ elements, and $\cup_{i=1}^m U_i=U$. 
Since each $U_i$ is a subset of $U$ and there is a total of ${|U|\choose k}$ subsets of $U$ of size $k$, the total number of subsets $m$ ranges from $1$ to ${|U|\choose k}$. Therefore, the coefficient of $T_{\vee U} T_{\vee U}^*$ in $Z(F_k)$ is given by
\[\sum_{m=1}^{|U|\choose k} (-1)^m n_{m,k}^U.\]
Here, 
\[n_{m,k}^U=\left|\left\{\{U_1,U_2,\dots,U_m\}: U_i\text { are distinct subsets of }U, |U_i|=k, \bigcup_{i=1}^m U_i=U\right\}\right|.\] In other words, $n_{m,k}^U$ is the number of ways of writing $U$ as a union of $m$ distinct subsets, where each subset contains $k$ elements. Sum this over all the $Z(F_k)$ where $k\leq |U|$, we obtain the coefficient of $T_{\vee U} T_{\vee U}^*$ in $\sum_{k=1}^{c_F} Z(F_k)$ is 
\[n_U=\sum_{k=1}^{|U|} \sum_{m=1}^{{|U|\choose k}} (-1)^m n_{m,k}^U\]

When $|U|=1$, $n_U=-1$ since $n_{m,k}^U=0$ except when $m=k=1$, $n_{m,k}^U=1$.

We now claim that $n_U=0$ for all $|U|>1$. Write $U=\{u_1,\dots, u_{|U|}\}$. For each subset $E\subset U$, define 
\[N_E=\left\{\{U_1,\dots,U_m\}:U_i\text { are distinct subsets of }U, |U_i|=k, E\cap \left(\bigcup_{i=1}^m U_i\right)=\emptyset\right\} \]
In other words, $N_E$ contains all those collections of subsets that miss all the elements in $E$. The number
\[n_{m,k}^U = N_\emptyset/\left(\bigcup_{i=1}^{|U|} N_{\{u_i\}}\right),\]
which is the number of $\{U_1,\dots,U_m\}$ such that each element $u_i$ is in some $U_j$. By the inclusion-exclusion principle,
\[n_{m,k}^U = \sum_{E\subset U} (-1)^{|E|} |N_E|.\]
Now for each $N_E$, the subsets $U_j$ are chosen from all those subsets that are disjoint with $E$. There is a total of ${|U|-|E|\choose k}$ such subsets of size $k$, and therefore
\[|N_E|={{|U|-|E|\choose k} \choose m}.\]
Therefore,
\[n_{m,k}^U = \sum_{j=0}^{|U|}  \sum_{|E|=j} (-1)^{|E|} |N_E| = \sum_{j=0}^{|U|} (-1)^j {|U| \choose j} {{|U|-j\choose k} \choose m}.\]
Now
\begin{align*}
    n_U &= \sum_{k=1}^{|U|} \sum_{m=1}^{{|U|\choose k}} (-1)^m n_{m,k}^U \\
    &= \sum_{k=1}^{|U|} \sum_{m=1}^{{|U|\choose k}}  \sum_{j=0}^{|U|} (-1)^m(-1)^j {|U| \choose j} {{|U|-j\choose k} \choose m}
\end{align*}

Notice that the summand is $0$ unless $|U|-j\geq k$ and ${|U|-j\choose k} \geq m$. We can change the order of summation to:
\begin{align*}
    n_U &= \sum_{j=0}^{|U|-1} (-1)^j {|U| \choose j} \cdot \left(\sum_{k=1}^{|U|-j}  \sum_{m=1}^{{|U|-j \choose k}} (-1)^m {{|U|-j\choose k} \choose m}\right).
\end{align*}
Use the property of binomial coefficients that $\sum_{m=1}^L (-1)^m {L\choose m} = -1$, we have,
\begin{align*}
    n_U &= \sum_{j=0}^{|U|-1} (-1)^j {|U| \choose j} \sum_{k=1}^{|U|-j} (-1) \\
    &= - \sum_{j=0}^{|U|-1} (-1)^j (|U|-j) {|U| \choose j}
\end{align*}
From here, we can use $f(x)=(x+1)^{|U|}=\sum_{j=0}^{|U|} (-1)^j {|U|\choose j} x^{|U|-j}$. Take $f'(x)$ and plug in $x=-1$ gives $n_U=0$ when $|U|>1$.  
\end{proof}

\begin{example}
Let $\Gamma$ be the graph considered in Example~\ref{ex.toy.graph}. 
Let $F=\{1,2,3,4\}\subset A_\Gamma^+$. Since $\vee F=\infty$ and $\vee\{1,2,4\}=124<\infty$, we have $c_F=3$. 
Lemma~\ref{lm.key.estimate} claims that if $\{T_j\}$ satisfies Condition~\ref{cond.Brehmer}, then,
\[\sum_{i=1}^4 T_i T_i^* \leq 3 I\]
To see this, let us follow the proof of Lemma~\ref{lm.key.estimate} and set
\begin{align*}
    F_1 &=\{1,2,3,4\}, \\
    F_2 &=\{\vee U: U\subset F, |U|=2, \vee U\neq\infty\}=\{12,14,24,34\}, \\
    F_3 &= \{\vee U: U\subset F, |U|=3, \vee U\neq\infty\}=\{124\}.
\end{align*}
The corresponding Brehmer's conditions for these sets are given by:
\begin{align*}
    Z(F_1) &= I-\sum_{i=1}^4 T_iT_i^* + T_{12}T_{12}^* + T_{14}T_{14}^*+T_{24}T_{24}^*+T_{34}T_{34}^*-T_{124}T_{124}^*\geq 0, \\
    Z(F_2) &= I-T_{12}T_{12}^* - T_{14}T_{14}^*-T_{24}T_{24}^*-T_{34}T_{34}^*+3T_{124}T_{124}^* - T_{124}T_{124}^* \geq 0, \\
    Z(F_3) &= I-T_{124}T_{124}^* \geq 0.
\end{align*}
Summing these three inequalities gives $3I-\sum_{i=1}^4 T_i T_i^* \geq 0$. 

To better illustrate the idea in the proof of Lemma~\ref{lm.key.estimate}, let us keep track of the term $T_{124} T_{124}^*$ (corresponding to $U=\{1,2,4\}$), we can see that it appear from
\begin{enumerate}
    \item $124=1\vee 2\vee 4$ in $Z(F_1)$, contributing a coefficient of $(-1)^3 n_{3,1}^U=-1$.
    \item $124=12\vee 14=12\vee 24=14\vee24$ in $Z(F_2)$, contributing a coefficient of $3\cdot (-1)^2 n_{2,2}^U=3$. 
    \item $124=12\vee 24\vee 14$ in $Z(F_2)$, contributing a coefficient of $(-1)^3 n_{3,2}^U=-1$.
    \item Finally, $124\in F_3$, so it appears in $Z(F_3)$ with a coefficient of $-n_{3,1}^U=-1$. 
\end{enumerate}
Summing all these together, the coefficient for $T_{124} T_{124}^*$ becomes $0$ in the sum $\sum_{k=1}^3 Z(F_k)$. 
\end{example}

Now to give a norm estimate of the Cauchy transform on $A_\Gamma^+$. We need to derive an upper bound for $\sum_{|p|=m} \|T_p^* h\|^2$. Lemma~\ref{lm.key.estimate} converts this problem into a combinatorial problem of finding the largest subset of $\{p: |p|=m\}$ that has an upper bound if $T$ satisfies Brehmer's condition (Condition~\ref{cond.Brehmer}) which is equivalent to Condition~\ref{cond.Brehmer.Artin}.

Let $\{T_j\}$ be a $\Gamma$-family that satisfies the weak Brehmer's condition. 
One caveat here is that the weak Brehmer's condition only ensures Condition~\ref{cond.Brehmer} on each subgraph $\Gamma_i$ corresponding to a connected component of $\Gamma^c$. The family may not satisfy Condition~\ref{cond.Brehmer} so that one cannot apply Leamm~\ref{lm.key.estimate} directly. Nevertheless, we can first apply Lemma~\ref{lm.key.estimate} on each component $\Gamma_i$, and then use the fact that $A_\Gamma^+=\prod A_{\Gamma_i}^+$ and each $\Gamma_i$-family commutes with each other.

Let $w=\omega(\Gamma)$ be the size of the largest clique in $\Gamma$. The number $w$ is often called the \emph{clique number} of the graph $\Gamma$.

\begin{lemma}\label{lm.clique}  Let $\{T_j\}$ be a $\Gamma$-family on $\bh{H}$ that satisfies the weak Brehmer's condition. Let $\omega$ be the clique number of $\Gamma$. 
Then for each $m\geq 1$ and $h\in\cH$, 
\[\sum_{|p|=m} \|T_p^* h\|^2 \leq  {w+m-1 \choose m}\|h\|^2\]
\end{lemma}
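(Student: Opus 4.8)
The plan is to reduce the bound over $A_\Gamma^+$ to a product of bounds over the connected components $\Gamma_i$ of $\Gamma^c$, apply Lemma~\ref{lm.key.estimate} on each component (where the genuine Brehmer condition holds), and then combine via the direct product decomposition $A_\Gamma^+ = \prod_{i=1}^k A_{\Gamma_i}^+$.

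First I would fix $m$ and write each $p$ with $|p|=m$ uniquely as $p = p^{(1)} p^{(2)} \cdots p^{(k)}$ with $p^{(i)} \in A_{\Gamma_i}^+$, using the direct product structure; this is an honest decomposition because generators from distinct components commute. Since the families $\{T_j : j \in \Gamma_i\}$ pairwise commute across different $i$, we have $T_p^* = (T_{p^{(k)}})^* \cdots (T_{p^{(1)}})^*$ and, crucially, each factor is a contraction, so $\|T_p^* h\|^2 \le \|(T_{p^{(i)}})^* h\|^2$ is \emph{not} quite what I want — instead I would peel off one component at a time. The clean way: for a fixed choice of $p^{(2)}, \dots, p^{(k)}$, set $h' = (T_{p^{(2)}})^* \cdots (T_{p^{(k)}})^* h$ (some ordering), and sum over $p^{(1)}$ with $|p^{(1)}| = m_1$; applying the component-level estimate gives $\sum_{|p^{(1)}|=m_1} \|(T_{p^{(1)}})^* h'\|^2 \le c_{m_1}(\Gamma_1)\, \|h'\|^2$, then iterate. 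So the core is a \emph{single-component} estimate: for a $\Gamma_i$-family satisfying Condition~\ref{cond.Brehmer.Artin} (equivalently~\ref{cond.Brehmer}), $\sum_{|p|=m_i,\, p\in A_{\Gamma_i}^+} \|T_p^* h\|^2 \le C_i(m_i) \|h\|^2$ for a suitable constant.

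For the single-component estimate I would invoke Lemma~\ref{lm.key.estimate} with $F = \{p \in A_{\Gamma_i}^+ : |p| = m_i\}$: the bound is $c_F \|h\|^2$ where $c_F$ is the largest cardinality of a subset $U \subseteq F$ with $\vee U \ne \infty$. By Lemma~\ref{lm.upper.bound} and the clique characterization of bounded subsets of generators, a set of elements of $A_{\Gamma_i}^+$ has a common upper bound only if their initial vertices lie in a clique, and unwinding this should show $c_F$ is at most the number of elements of norm $\le m_i$ supported on a single clique of $\Gamma_i$ — which, for a clique of size $s$, is counted by the number of monomials, giving $\binom{s + m_i - 1}{m_i}$-type bounds. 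Then combining over components: if $m = m_1 + \cdots + m_k$ and clique sizes $s_i$ in $\Gamma_i$ sum to at most $\omega$ (since a clique in $\Gamma$ meeting each $V_i$ restricts to a clique in each $\Gamma_i$, and conversely cliques across components combine — here one uses that vertices in different $\Gamma_i$ are always adjacent in $\Gamma$), the product $\prod_i \binom{s_i + m_i - 1}{m_i}$ summed over compositions $m = \sum m_i$ collapses, via Vandermonde's identity $\sum \binom{s_i+m_i-1}{m_i} = \binom{(\sum s_i) + m - 1}{m}$, to the claimed $\binom{w+m-1}{m}$.

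The main obstacle I expect is the bookkeeping in the combinatorial identification of $c_F$ for a single component: making precise that the largest $\vee$-compatible family of norm-$m_i$ elements is achieved by elements all supported on one maximum clique, and that the count is exactly $\binom{\omega(\Gamma_i) + m_i - 1}{m_i}$ rather than something larger. One has to rule out, using Lemma~\ref{lm.upper.bound} carefully, configurations where the supports are not all equal but still pairwise (indeed jointly) compatible; the subtlety is that $\vee U \ne \infty$ is a joint condition, not just pairwise, so a clean inductive argument on the number of distinct initial vertices, or a direct argument that any $\vee$-compatible family sits inside $A_W^+$ for a single clique $W$, is needed. Once that is in hand, the assembly of the components is routine: the iterated peeling-off uses only that contractions do not increase norm and the direct-product factorization, and the final summation is Vandermonde.
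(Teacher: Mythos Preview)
Your plan is essentially the paper's own argument: decompose $A_\Gamma^+=\prod_i A_{\Gamma_i}^+$, apply Lemma~\ref{lm.key.estimate} on each component to reduce to the combinatorial bound on $c_F$ for $F=\{p\in A_{\Gamma_i}^+:|p|=m_i\}$, and combine via the Vandermonde/negative-binomial convolution $\sum_{m_1+\cdots+m_k=m}\prod_i\binom{\omega_i+m_i-1}{m_i}=\binom{\omega+m-1}{m}$ with $\omega=\sum_i\omega_i$. The paper resolves your identified obstacle by a clean double induction on $m$ and $\omega$: fix an initial vertex $v$ of some $p_1\in U$, split $U$ into those elements with $v$ initial (strip $e_v$ to drop $m$ by one) and those without (by Lemma~\ref{lm.upper.bound} these live in $A^+_{\Gamma|_{N(v)}}$, where the clique number drops by one), then apply Pascal's identity. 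One caution: your alternative route, ``any $\vee$-compatible family sits inside $A_W^+$ for a single clique $W$,'' is false as stated---a singleton $\{p\}$ trivially has $\vee\{p\}=p<\infty$, yet $p$ need not be supported on a clique (e.g.\ $p=e_1e_2e_3$ in a path $1\text{--}2\text{--}3\text{--}4$)---so the inductive route is the one to pursue.
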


\begin{proof} Let us first consider the case when $\Gamma^c$ is connected, in which case $\{T_j\}$ satisfies Brehmer's Condition~\ref{cond.Brehmer.Artin} and thus Condition~\ref{cond.Brehmer}. By Lemma~\ref{lm.key.estimate}, it suffices to show  $c_{F_m}={w+m-1 \choose m}$ where $E_m=\{p\in A_\Gamma^+: |p|=m\}$. 

Without loss of generality, assume $\{1,2,\dots, \omega\}$ is a clique in $\Gamma$ with the largest size. As a result, the generators $\{e_1,\dots, e_\omega\}$ commutes. If we let 
\[U_m=\{p\in \langle e_1,\dots, e_\omega\rangle\subset A_\Gamma^+: |p|=m\} = \left\{e_1^{a_1}\cdots e_\omega^{a_\omega}: a_j\geq 0, \sum_{j=1}^\omega a_j=m\right\}.\]
Then it is clear that $\vee U=e_1^me_2^m\cdots e_\omega^m<\infty$ and $\ds|U_m|={w+m-1 \choose m}$ (corresponding to the number of non-negative integer solutions $\{a_j\}$ to $\sum_{j=1}^\omega a_j=m$). 

Therefore, we only need to show that $c_{E_m}\leq {w+m-1 \choose m}$. We do an induction on $m$ and $\omega$. The result is clearly true for $m=1$ or $\omega=1$. 

Let $U=\{p_1, \dots, p_c\}\subset E_m$ be a subset such that $\vee U\neq\infty$.  Without loss of generality, we assume the vertex $1\in \Gamma$ is an initial vertex for $p_1$.
Reorder $p_i$ such that $p_1,\dots, p_{c_1}$ have the vertex $1$ as an initial vertex, while $p_{c_1+1},\dots, p_c$ do not. Since the vertex $1$ is an initial vertex in all of $p_1,\dots, p_{c_1}$, we can remove one $e_1$ on the left from them and obtain a subset $U_1=\{e_1^{-1}p_1,\dots, e_1^{-1}p_{c_1}\}\subset E_{m-1}$, and $\vee U_1\neq\infty$. By the induction hypothesis, $\ds|U_1| \leq {\omega + m-2 \choose m-1}$. 

On the other hand, consider $U_2=\{p_{c_1+1}, \dots, p_c\}$. By Lemma~\ref{lm.upper.bound}, the vertex $1$ must be adjacent to all vertices for these $p_j$. Let $N_1=\{j\in \Gamma: 1j\in E\}$ be all the vertices in $\Gamma$ that are adjacent to $1$ and $\Gamma_1=\Gamma|_{N_1}$ be the subgraph restricted to all such vertices. We have that for $c_1+1\leq j\leq c$, $p_j\in A_{\Gamma_1}^+\subset A_\Gamma^+$. 

Now $U_2\subset E_m \subset A_{\Gamma_1}^+$ satisfies $\vee U_2\neq \infty$. The largest clique in the subgraph $\Gamma_1$ cannot exceed $\omega-1$, since any clique in $\Gamma_1$ plus their common neighbour vertex $1$ forms a clique of size at most $\omega$ in $\Gamma$. Therefore, by the induction hypothesis,
$\ds|U_2| \leq {(\omega-1) + m -1 \choose m}$.

Combining these together,
\[|U|=|U_1|+|U_2|\leq {\omega + m-2 \choose m-1}+{\omega + m -2 \choose m} = {\omega+m-1\choose m}.\]

Now for a general graph $\Gamma$, let $\{\Gamma_i\}_{i=1}^k$ be all the subgraphs of $\Gamma$ that correspond to connected components of $\Gamma^c$. Let $\omega_i=\omega(\Gamma_i)$. It is clear that $\omega=\sum_{i=1}^k \omega_i$ since cliques in $\Gamma_i$'s will join together as a clique in $\Gamma$. Each $\Gamma_i$'s complement graph $\Gamma_i^c$ is connected. Each $\{T_j: j\in \Gamma_i\}$ is a $\Gamma_i$-family that satisfies Brehmer's Condition~\ref{cond.Brehmer.Artin}. Therefore, by previous calculations, for each $m\geq1$ and $h\in \cH$,
\begin{equation}\label{eq.connected}
    \sum_{\substack{p\in A_{\Gamma_i}^+ \\|p|=m}} \|T_p^*h\|^2 \leq {\omega_i + m-1 \choose m} \|h\|^2
\end{equation}
Now $A_\Gamma^+=\prod_{i=1}^k A_{\Gamma_i}^+$. Thus, an element $p\in A_\Gamma^+$ decomposes as a product $p=\prod_{i=1}^k p_i$ with $p_i\in A_{\Gamma_i}^+$. 
\[\sum_{\substack{p\in A_{\Gamma}^+ \\ |p|=m}} \|T_p^*h\|^2 = \sum_{\substack{p_i\in A_{\Gamma_i}^+ \\ |p_1|+\dots+|p_k|=m}} \|T_{p_1}^*\cdots T_{p_k}^*h\|^2\]
Repeatedly apply Equation~\eqref{eq.connected}, we have that
\[\sum_{\substack{p_i\in A_{\Gamma_i}^+ \\ |p_1|+\dots+|p_k|=m}} \|T_{p_1}^*\cdots T_{p_k}^*h\|^2 \leq \sum_{m_1+\dots+m_k=m} {\omega_i + m_i - 1 \choose m_i} \|h\|^2.\]
Recall the negative binomial expansion 
\[\frac{1}{(1-z)^{\omega_i}}=\sum_{m_i\geq 0} {\omega_i + m_i - 1 \choose m_i} z^{m_i}.\]
By considering the coefficient of $z^m=z^{m_1+\dots+m_k}$ in $\prod \frac{1}{(1-z)^{\omega_i}}=\frac{1}{(1-z)^\omega}$, we have
\[\sum_{m_1+\dots+m_k=m} {\omega_i + m_i - 1 \choose m_i}={\omega+m-1\choose m}.\]
This proves the desired inequality. 
\end{proof}

As a result, we can now show that the Cauchy transform is indeed a bounded map. 

\begin{theorem}\label{thm.Cauchy.bounded} 
Let $\{T_j\}$ be a $\Gamma$-family on $\bh{H}$ that satisfies the weak Brehmer's condition.
Then $C_{r,T}$ is bounded for all $0\leq r<1$. Moreover, let $\omega=\omega(\Gamma)$ be the clique number of $\Gamma$. For each $h\in \cH$, we have
\[\|C_{r,T} h\|^2 \leq \frac{1}{(1-r^2)^{\omega}} \|h\|^2.\]
\end{theorem}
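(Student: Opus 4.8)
The plan is to feed the estimate of Lemma~\ref{lm.clique} into the series identity for $\|C_{r,T}h\|^2$ already displayed above, and then sum the resulting majorant in closed form via the negative binomial theorem.

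First, I would start from
\[\|C_{r,T}h\|^2 = \sum_{m=0}^\infty \left(\sum_{|p|=m}\|T_p^*h\|^2\right) r^{2m},\]
which was derived just before Lemma~\ref{lm.key.estimate}; since every summand is non-negative, the right-hand side is a well-defined element of $[0,\infty]$ and no separate justification for rearranging the sum is needed. The $m=0$ term equals $\|h\|^2$, because the only $p\in A_\Gamma^+$ with $|p|=0$ is the identity and $T_1=I$, and this agrees with ${\omega-1 \choose 0}=1$, so the bound of Lemma~\ref{lm.clique} in fact applies for every $m\geq 0$. Substituting $\sum_{|p|=m}\|T_p^*h\|^2 \leq {\omega+m-1 \choose m}\|h\|^2$ gives
\[\|C_{r,T}h\|^2 \leq \|h\|^2 \sum_{m=0}^\infty {\omega+m-1 \choose m}\, r^{2m}.\]

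Next, I would invoke the negative binomial expansion $\frac{1}{(1-z)^\omega}=\sum_{m\geq 0}{\omega+m-1 \choose m}\, z^m$, valid for $|z|<1$, applied with $z=r^2\in[0,1)$. This identifies the series above as $(1-r^2)^{-\omega}<\infty$, whence
\[\|C_{r,T}h\|^2 \leq \frac{1}{(1-r^2)^\omega}\,\|h\|^2\]
for all $h\in\cH$ and all $0\leq r<1$; in particular the formal sum defining $C_{r,T}h$ converges to a genuine vector of $\ell^2(A_\Gamma^+)\otimes\cH$ and $C_{r,T}$ is bounded with $\|C_{r,T}\|\leq (1-r^2)^{-\omega/2}$, as claimed.

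I do not expect a genuine obstacle at this stage: all the difficulty has been absorbed into Lemmas~\ref{lm.key.estimate} and \ref{lm.clique}, whose combinatorial content (the vanishing of $n_U$ and the clique-number count $c_{E_m}={\omega+m-1 \choose m}$) is exactly what makes the geometric-type majorant summable. The only points deserving a line of care are the handling of the $m=0$ term and the remark that, the terms being non-negative, the convergence of the closed-form majorant is all that is required.
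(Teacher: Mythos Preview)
Your proposal is correct and follows essentially the same argument as the paper: substitute the bound from Lemma~\ref{lm.clique} into the series $\|C_{r,T}h\|^2=\sum_{m\geq 0}\bigl(\sum_{|p|=m}\|T_p^*h\|^2\bigr)r^{2m}$ and evaluate the majorant via the negative binomial expansion $\sum_{m\geq 0}{\omega+m-1\choose m}r^{2m}=(1-r^2)^{-\omega}$. Your extra remarks about the $m=0$ term and non-negativity are harmless elaborations on what the paper leaves implicit.
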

\begin{proof}
    By Lemma~\ref{lm.key.estimate} and \ref{lm.clique}, we have the estimate:
    \[\sum_{|p|=m} \|T_p^* h\|^2 \leq {\omega+m-1 \choose m} \|h\|^2.\]
Therefore,
\[\|C_{r,T} h\|^2 = \sum_{m=0}^\infty \left(\sum_{|p|=m} \|T_p^* h\|^2\right) r^{2m} \leq \sum_{m=0}^\infty {\omega+m-1 \choose m} r^{2m} \|h\|^2 = \frac{\|h\|^2}{(1-r^2)^\omega}. \qedhere\]
\end{proof}

In the case when $\Gamma=K_{n_1,\dots,n_k}$ is a complete $k$-partite graph, the largest clique in $\Gamma$ has size $\omega=k$. The estimate $\ds\|C_{r,T}h\|^2 \leq \frac{\|h\|^2}{(1-r^2)^k}$ is the same estimated as obtained in \cite{Popescu1999}.  

Recall that $\lambda:A_\Gamma^+\to\cB(\ell^2(A_\Gamma^+))$ is the left regular representation of the monoid $A_\Gamma^+$. The following Proposition is an analogue of \cite[Theorem 2.1]{Popescu1999}. 

\begin{proposition}\label{prop.Cauchy} Let $\{T_j\}$ be a $\Gamma$-family on $\bh{H}$ that satisfies the weak Brehmer's condition. Let $C_{r,T}: \cH\to \ell^2(A_\Gamma^+)\otimes\cH$ be the Cauchy transform for $0\leq r<1$. Then for any $p\in A_\Gamma^+$ and $h,k\in \cH$, 
\[\langle r^{|p|} T_p h ,k\rangle = \langle (\lambda_p\otimes I_\cH)(e_1\otimes h), C_{r,T} k\rangle.\]
\end{proposition}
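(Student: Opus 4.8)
The plan is a direct computation straight from the definitions, with Theorem~\ref{thm.Cauchy.bounded} invoked only to license an interchange of limit and inner product. The underlying point is that $C_{r,T}$ is, up to the ``vacuum vector'' $e_1\in\ell^2(A_\Gamma^+)$, the adjoint of the map sending the left regular representation $\lambda_p$ to the scaled operator $r^{|p|}T_p$, and this is visible syllable by syllable.

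First I would simplify the left input vector: since $\lambda$ is the left shift, $\lambda_p e_q = e_{pq}$, and since $1$ is the identity of $A_\Gamma^+$,
\[
(\lambda_p\otimes I_\cH)(e_1\otimes h) = (\lambda_p e_1)\otimes h = e_{p\cdot 1}\otimes h = e_p\otimes h.
\]
Next I would expand the Cauchy transform by its defining formula, $C_{r,T}k = \sum_{q\in A_\Gamma^+} e_q\otimes r^{|q|}T_q^* k$, and compute
\[
\big\langle (\lambda_p\otimes I_\cH)(e_1\otimes h),\, C_{r,T}k\big\rangle
= \Big\langle e_p\otimes h,\ \sum_{q\in A_\Gamma^+} e_q\otimes r^{|q|}T_q^* k\Big\rangle
= \sum_{q\in A_\Gamma^+} \langle e_p, e_q\rangle\,\big\langle h,\, r^{|q|}T_q^* k\big\rangle,
\]
where I use continuity of the inner product to move it past the sum; this is legitimate precisely because Theorem~\ref{thm.Cauchy.bounded} guarantees the series defining $C_{r,T}k$ converges in $\ell^2(A_\Gamma^+)\otimes\cH$ for every $0\le r<1$. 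By orthonormality of $\{e_q\}$ only the $q=p$ term survives, so the expression equals $\langle h,\, r^{|p|}T_p^* k\rangle$. Finally, since $r^{|p|}$ is a nonnegative real scalar and $T_p^*$ is the Hilbert space adjoint of $T_p$, I get $r^{|p|}\langle h, T_p^* k\rangle = r^{|p|}\langle T_p h, k\rangle = \langle r^{|p|}T_p h, k\rangle$, which is exactly the asserted identity.

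I do not expect any genuine obstacle: the only subtlety is the convergence/interchange step just noted, and that is already supplied by the previous theorem. Everything else is bookkeeping with the definitions of $\lambda$, of $C_{r,T}$, and of the adjoint.
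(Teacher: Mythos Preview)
Your proof is correct and follows essentially the same route as the paper: expand the right-hand side using $(\lambda_p\otimes I_\cH)(e_1\otimes h)=e_p\otimes h$ and the defining series for $C_{r,T}k$, then use orthonormality of $\{e_q\}$ to pick off the $q=p$ term and apply the adjoint relation. Your explicit mention of Theorem~\ref{thm.Cauchy.bounded} to justify the limit/inner-product interchange is a nice touch that the paper leaves implicit.
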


\begin{proof}
    Starting from the right-hand side,
    \begin{align*}
         \langle (\lambda_p\otimes I_\cH)(e_1\otimes h), C_{r,T} k\rangle &= \left\langle e_p\otimes h, \sum_{q\in A_\Gamma^+} e_q \otimes r^{|q|} T_q^*h\right\rangle \\
         &= \left\langle e_p\otimes h, e_p \otimes r^{|p|} T_p^*h\right\rangle \\
         &= \langle r^{|p|} T_p h ,k\rangle. \qedhere
    \end{align*}
\end{proof}


\section{Poisson transform}\label{sec.Poisson}


The goal of this section is to define the Poisson transform for a $\Gamma$-family $\{T_j\}$ with weak Brehmer's condition and the property (P). Using the properties of this Poisson transform, we shall prove that such a family has a $*$-regular dilation. Let us first rephrase Popescu's property (P) in terms of $\Gamma$-families of right-angled Artin monoid (see also \cite{Li2017}). 
\begin{definition} Let $\{T_j\}$ be a $\Gamma$-family. Let $F=\{e_1,\dots,e_n\}$ be the set of generators of $A_\Gamma^+$. We say $\{T_j\}$ satisfies property (P) if for there exists $0\leq \rho<1$ such that for all $\rho<r<1$, 
\[\Delta_{r,T}:=\sum_{U\subset F} (-1)^{|U|} r^{2|U|} T_{\vee U} T_{\vee U}^* \geq 0\]
\end{definition}

\begin{lemma}\label{lm.standard.tech} Let $\{T_j\}$ on $\bh{H}$ be a $\Gamma$-family that satisfies the weak Brehmer's condition and the property (P). Then for each $\rho<r<1$, 
    \[\sum_{p\in A_\Gamma^+} r^{2|p|}T_p \Delta_{r,T} T_p^* = I.\]
\end{lemma}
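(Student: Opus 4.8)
The plan is to expand $\Delta_{r,T}$, interchange the order of summation, and collapse everything to a single combinatorial identity about final vertices. Recall (from the end of Section~2) that a subset $U$ of generators has $\vee U\neq\infty$ iff $U$ is a clique, in which case $\vee U=\prod_{e_i\in U}e_i$; hence $\Delta_{r,T}=\sum_{W}(-1)^{|W|}r^{2|W|}T_{\vee W}T_{\vee W}^*$, the sum over cliques $W$ of $\Gamma$ (the $W=\emptyset$ term being $I$). Since $T$ is a homomorphism, $T_pT_{\vee W}=T_{p\,\vee W}$ and $|p\,\vee W|=|p|+|W|$, so formally
\[
\sum_{p\in A_\Gamma^+} r^{2|p|}\,T_p\Delta_{r,T}T_p^* \;=\; \sum_{q\in A_\Gamma^+}\Big(\textstyle\sum_{W}(-1)^{|W|}\Big)\,r^{2|q|}\,T_qT_q^*,
\]
where for fixed $q$ the inner sum runs over all cliques $W$ such that $\vee W$ right-divides $q$, i.e.\ $q=p\,\vee W$ for some $p\in A_\Gamma^+$ (unique, by cancellativity).

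To justify this rigorously I would use property (P) to write $\Delta_{r,T}=D^*D$ with $D=\Delta_{r,T}^{1/2}$; then $\|DT_p^*h\|^2\le\|D\|^2\|T_p^*h\|^2$, and Theorem~\ref{thm.Cauchy.bounded} (via Lemma~\ref{lm.clique}) gives $\sum_p r^{2|p|}\|DT_p^*h\|^2\le\|D\|^2(1-r^2)^{-\omega}\|h\|^2<\infty$, so the series of positive operators $\sum_p r^{2|p|}T_p\Delta_{r,T}T_p^*$ converges in the strong operator topology. Moreover $\|T_{p\,\vee W}^*h\|=\|T_{\vee W}^*T_p^*h\|\le\|T_p^*h\|$, so the scalar family $\{\,r^{2|p\,\vee W|}(-1)^{|W|}\langle T_{p\,\vee W}T_{p\,\vee W}^*h,h\rangle\,\}_{(p,W)}$ is absolutely summable, which makes the rearrangement above legitimate on matrix coefficients, hence as an operator identity.

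It then remains to show the inner coefficient $c_q:=\sum_{W}(-1)^{|W|}$ (over cliques $W$ whose $\vee W$ right-divides $q$) equals $1$ for $q=1$ and $0$ otherwise; since $|1|=0$ and $T_1=I$ this yields $\sum_p r^{2|p|}T_p\Delta_{r,T}T_p^*=I$. Let $S(q)$ be the set of final vertices of $q$. I claim the cliques $W$ with $\vee W$ right-dividing $q$ are exactly the subsets of $S(q)$, and that $S(q)$ is itself a clique; then $c_q=\sum_{W\subseteq S(q)}(-1)^{|W|}=(1-1)^{|S(q)|}$, which is $0$ unless $S(q)=\emptyset$, i.e.\ unless $q=1$. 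One inclusion is immediate: if $q=p\,\vee W$ then each $e_i$, $i\in W$, shuffles to the end, so $W\subseteq S(q)$, and this family of $W$ is closed under subsets. That $S(q)$ is a clique follows from Green's theorem: if $i\neq j$ were non-adjacent final vertices, then from a reduced expression of $q$ ending in the syllable $e_i^{a}$ one could never shuffle an $e_j$-syllable past $e_i^{a}$ to the end, contradicting that $j$ is final. The converse—every clique $W\subseteq S(q)$ has $\vee W$ right-dividing $q$—I would prove by induction on $|q|$: pick $i\in W$ and write $q=\bar q\,e_i^{a}$ with $e_i^{a}$ the final syllable; for each $j\in W\setminus\{i\}$, Green's theorem gives a reduced expression of $q$ ending in $\cdots e_j^{b}e_i^{a}$, so $j\in S(\bar q)$; since $W\setminus\{i\}$ is a smaller clique inside $S(\bar q)$ and $|\bar q|<|q|$, the induction hypothesis gives $\bar q=p_0\prod_{j\in W\setminus\{i\}}e_j$, whence $q=p_0\,e_i^{a-1}\big(\prod_{j\in W\setminus\{i\}}e_j\big)e_i=p_0\,e_i^{a-1}\,\vee W$ because $e_i$ commutes with every $e_j$, $j\in W$.

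I expect this last statement—right-divisibility of $q$ by $\vee W$ for any clique $W$ of final vertices—to be the main obstacle, since it is where the monoid combinatorics (Green's theorem, shuffles, and control of reduced expressions) genuinely enters; once it is established, the rest is bookkeeping on top of Theorem~\ref{thm.Cauchy.bounded}.
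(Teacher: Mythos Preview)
Your approach is correct and essentially identical to the paper's: expand $\Delta_{r,T}$, reindex via $q=p\cdot\vee W$, and observe that the resulting coefficient $\sum_{W\subseteq S(q)}(-1)^{|W|}$ vanishes unless $q=1$. You in fact supply details the paper leaves implicit---the SOT-convergence and absolute summability via Theorem~\ref{thm.Cauchy.bounded}, and the bijection between cliques $W$ with $\vee W$ right-dividing $q$ and subsets of the final-vertex set $S(q)$---so your version is the more complete of the two.
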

\begin{proof}
Set $F=\{e_1,\dots,e_n\}$ and expand the left hand side:
\begin{align*}
    \sum_{p\in A_\Gamma^+} r^{2|p|}T_p \Delta_{r,T} T_p^* &= \sum_{m=0}^\infty \sum_{|p|=m} \sum_{U\subset F} (-1)^{|U|}r^{2(m+|U|)} T_{p\cdot \vee U} T_{p\cdot \vee U}^* \\
    &= \sum_{m=0}^\infty \sum_{|q|=m} \left(\sum_{(p,U), p\cdot \vee U=q} (-1)^{|U|}\right) r^{2|q|} T_q T_q^*
\end{align*}
Now for a fixed $q\in A_\Gamma^+$, $q=p\cdot \vee U$ implies that each element in $U$ comes from a final syllable of $q$. Let $F(q)=\{e_i: i\text{ is a final vertex of }q\}$. Then we have
\[\sum_{(p,U), p\cdot \vee U=q} (-1)^{|U|} =\sum_{U\subset F(q)} (-1)^{|U|}.\]
It is a standard combinatorial fact that this sum is $0$ unless $F(q)=\emptyset$, which corresponds to $q=1$. Therefore, the only non-zero term in the summation is $T_1T_1^*=I$. 
\end{proof}

\begin{lemma}\label{lm.PoissonKernel} Let $\{T_j\}$ on $\bh{H}$ be a $\Gamma$-family that satisfies the weak Brehmer's condition and the property (P). Define $K_{r,T}:\cH\to\ell^2(A_\Gamma^+)\otimes\cH$ by
\[K_{r,T} h := \left(I\otimes \Delta_{r,T}^{1/2}\right) C_{r,T} h.\]
Then $K_{r,T}$ is an isometry for each $\rho< r<1$. 
\end{lemma}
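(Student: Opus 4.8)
The plan is to compute $\|K_{r,T}h\|^2$ directly and show it equals $\|h\|^2$ using Lemma~\ref{lm.standard.tech}. First I would write
\[
\|K_{r,T}h\|^2 = \left\langle \left(I\otimes\Delta_{r,T}\right) C_{r,T}h,\ C_{r,T}h\right\rangle,
\]
which is legitimate since $\Delta_{r,T}^{1/2}$ is self-adjoint and positive. Expanding $C_{r,T}h = \sum_{p\in A_\Gamma^+} e_p\otimes r^{|p|}T_p^*h$ and using orthonormality of $\{e_p\}$, the cross terms vanish and we are left with
\[
\|K_{r,T}h\|^2 = \sum_{p\in A_\Gamma^+} r^{2|p|}\left\langle \Delta_{r,T}\, T_p^*h,\ T_p^*h\right\rangle
= \left\langle \left(\sum_{p\in A_\Gamma^+} r^{2|p|}\, T_p\, \Delta_{r,T}\, T_p^*\right) h,\ h\right\rangle.
\]
By Lemma~\ref{lm.standard.tech}, the operator in parentheses equals $I$, so $\|K_{r,T}h\|^2 = \|h\|^2$, giving the isometry claim.

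The one technical point that needs care is the interchange of the infinite sum with the inner product. Here I would invoke Theorem~\ref{thm.Cauchy.bounded}: for $0\le r<1$ the Cauchy transform $C_{r,T}$ is a bounded operator into $\ell^2(A_\Gamma^+)\otimes\cH$, so $C_{r,T}h$ is a genuine element of the Hilbert space and the partial sums $\sum_{|p|\le N} e_p\otimes r^{|p|}T_p^*h$ converge to it in norm. Since $I\otimes\Delta_{r,T}^{1/2}$ is bounded (as $\Delta_{r,T}\ge 0$ and each $T_{\vee U}$ is a contraction, $\Delta_{r,T}$ is bounded), $K_{r,T}$ is bounded and $K_{r,T}h$ is the norm-limit of $(I\otimes\Delta_{r,T}^{1/2})\sum_{|p|\le N} e_p\otimes r^{|p|}T_p^*h$. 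Computing $\|K_{r,T}h\|^2$ as the limit of the norms of these partial expressions, and noting that for each fixed $N$ the finite sum $\sum_{|p|\le N} r^{2|p|}\langle\Delta_{r,T}T_p^*h, T_p^*h\rangle$ increases (the summands are nonnegative because $\Delta_{r,T}\ge 0$ when $\rho<r<1$ by property (P)) and is bounded by $\|h\|^2$, the exchange of sum and inner product is justified by monotone convergence of a series of nonnegative reals.

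I expect the main obstacle to be purely bookkeeping: making sure that property (P) is genuinely invoked at the right place — it guarantees $\Delta_{r,T}\ge 0$ (so that $\Delta_{r,T}^{1/2}$ makes sense, the terms are nonnegative, and the rearrangement is unconditional) — while the weak Brehmer's condition is what supplies, through Theorem~\ref{thm.Cauchy.bounded}, the boundedness of $C_{r,T}$ needed for everything to live in the Hilbert space. Once those two hypotheses are correctly attributed, the identity $\sum_p r^{2|p|}T_p\Delta_{r,T}T_p^* = I$ from Lemma~\ref{lm.standard.tech} does the rest, and the computation is a short one.
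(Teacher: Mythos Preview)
Your proof is correct and follows exactly the same approach as the paper: expand $\|K_{r,T}h\|^2$ using the orthonormality of $\{e_p\}$ to obtain $\left\langle \sum_{p} r^{2|p|} T_p \Delta_{r,T} T_p^* h, h\right\rangle$, then apply Lemma~\ref{lm.standard.tech}. The paper's proof is a one-line computation that omits the convergence justification you carefully spell out, but the argument is identical.
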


\begin{proof} From definition and apply Lemma~\ref{lm.standard.tech},
\[\|K_{r,T}h\|^2 = \left\|\sum_{p\in A_\Gamma^*} e_p \otimes r^{|p|}\Delta_{r,T}^{1/2} T_p^* h\right\|^2= \left\langle \sum_{p\in A_\Gamma^+} r^{2|p|}T_p \Delta_{r,T} T_p^* h, h\right\rangle=\|h\|^2. \qedhere\]
\end{proof}

The family $\{K_{r,T}\}$ is called the \emph{Poisson kernel}. 
Now define $P_{r,T}: C^*_\lambda(A_\Gamma^+) \to \bh{H}$ by
\[P_{r,T}(a) := (K_{r,T})^* (a\otimes I_\cH) K_{r,T}.\]
\begin{proposition}\label{prop.Pr} Let $\{T_j\}$ on $\bh{H}$ be a $\Gamma$-family that satisfies the weak Brehmer's condition and the property (P).
\begin{enumerate}
    \item For each $\rho<r<1$ and $p,q\in A_\Gamma^+$
    \[ r^{|p|+|q|} T_p T_q^* = P_{r,T}(\lambda_p \lambda_q^*).\]
    \item $P_{r,T}$ is unital completely contractive linear map.
\end{enumerate}
\end{proposition}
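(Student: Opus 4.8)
The plan is to prove part (1) first, since part (2) follows from it together with the fact that $K_{r,T}$ is an isometry. For part (1), I would compute $P_{r,T}(\lambda_p\lambda_q^*) = K_{r,T}^*(\lambda_p\lambda_q^*\otimes I_\cH)K_{r,T}$ directly by expanding $K_{r,T} = (I\otimes\Delta_{r,T}^{1/2})C_{r,T}$ and using the explicit formula for the Cauchy transform. Concretely, for $h,k\in\cH$ I would write
\[\langle P_{r,T}(\lambda_p\lambda_q^*)h, k\rangle = \langle (\lambda_p\lambda_q^*\otimes I_\cH)K_{r,T}h, K_{r,T}k\rangle = \left\langle (\lambda_p\lambda_q^*\otimes \Delta_{r,T})\sum_{s} e_s\otimes r^{|s|}T_s^*h, \sum_t e_t\otimes r^{|t|}T_t^*k\right\rangle.\]
Since $\lambda_q^* e_s = e_{q^{-1}s}$ when $q\leq s$ and $0$ otherwise, and then $\lambda_p$ shifts by $p$, the left vector becomes $\sum_{s:\, q\leq s} e_{p\cdot q^{-1}s}\otimes r^{|s|}\Delta_{r,T}T_s^*h$. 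Pairing against $\sum_t e_t\otimes r^{|t|}T_t^*k$ forces $t = p\cdot q^{-1}s$; writing $s = q\cdot u$ so that $t = p\cdot u$ and $|s| = |q|+|u|$, $|t| = |p|+|u|$, the inner product collapses to
\[\sum_{u\in A_\Gamma^+} r^{|p|+|q|+2|u|}\langle \Delta_{r,T} T_{qu}^* h, T_{pu}^* k\rangle = r^{|p|+|q|}\left\langle \left(\sum_{u} r^{2|u|} T_u\Delta_{r,T}T_u^*\right)T_q^* h, T_p^* k\right\rangle.\]
By Lemma~\ref{lm.standard.tech} the operator sum in the middle equals $I$, so this is $r^{|p|+|q|}\langle T_q^*h, T_p^*k\rangle = \langle r^{|p|+|q|}T_pT_q^* h, k\rangle$, which is exactly part (1). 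The one point requiring a little care is the rearrangement $s\mapsto (q,u)$: I should note that $q\leq s$ means $s = qu$ for a \emph{unique} $u$ (since $A_\Gamma^+$ has no nontrivial invertible elements, as recalled in the excerpt), so there is no overcounting, and all series involved converge absolutely by Theorem~\ref{thm.Cauchy.bounded}, justifying the interchange of summations.

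For part (2), unitality is the special case $p=q=1$ of part (1): $P_{r,T}(\lambda_1\lambda_1^*) = P_{r,T}(I) = T_1T_1^* = I$. Equivalently, $P_{r,T}(I) = K_{r,T}^*(I\otimes I_\cH)K_{r,T} = K_{r,T}^*K_{r,T} = I$ since $K_{r,T}$ is an isometry by Lemma~\ref{lm.PoissonKernel}. Complete contractivity is immediate from the structure $a\mapsto K_{r,T}^*(a\otimes I_\cH)K_{r,T}$: the map $a\mapsto a\otimes I_\cH$ from $C^*_\lambda(A_\Gamma^+)$ into $\cB(\ell^2(A_\Gamma^+)\otimes\cH)$ is a $*$-homomorphism hence completely isometric, and compression $X\mapsto K_{r,T}^*XK_{r,T}$ by the isometry $K_{r,T}$ is completely contractive; the composition of a complete isometry with a complete contraction is completely contractive. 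Linearity is clear. I do not expect any serious obstacle here — the heart of the matter is part (1), and within that the only delicate bookkeeping is the reindexing and the appeal to Lemma~\ref{lm.standard.tech}, both of which are routine once set up carefully.
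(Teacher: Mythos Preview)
Your proposal is correct and follows essentially the same approach as the paper: both compute $\langle P_{r,T}(\lambda_p\lambda_q^*)h,k\rangle$ by expanding $K_{r,T}$ via the Cauchy transform, reindex using the action of $\lambda_q^*$ and $\lambda_p$, and invoke Lemma~\ref{lm.standard.tech} to collapse the remaining sum to the identity; part (2) is then deduced from the isometric property of $K_{r,T}$ and the case $p=q=1$. Your write-up is in fact slightly more careful than the paper's in justifying the reindexing and convergence, but the argument is the same.
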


\begin{proof} Take $p,q\in A_\Gamma^+$ and $h,k\in \cH$, 
\begin{align*}
    \langle P_{r,T}(\lambda_p \lambda_q^*)h, k\rangle &= \left\langle (\lambda_p \lambda_q^*\otimes I_\cH) \sum_{s\in A_\Gamma^+} e_s\otimes r^{|s|} \Delta_{r,T}^{1/2} T_s^* h, \sum_{t\in A_\Gamma^+} e_t\otimes r^{|t|} \Delta_{r,T}^{1/2} T_t^* h\right\rangle \\
    &=  \left\langle \sum_{s=qq'\in A_\Gamma^+} e_{q'}\otimes r^{|q|+|q'|} \Delta_{r,T}^{1/2} T_{qq'}^* h, \sum_{t=pp'\in A_\Gamma^+} e_{p'}\otimes r^{|p|+|p'|} \Delta_{r,T}^{1/2} T_{pp'}^* h\right\rangle \\
    &=  \left\langle r^{|p|+|q|} T_p \left(\sum_{p'=q'\in A_\Gamma^+} r^{2|p'|} T_{p'} \Delta_{r,T} T_{p'}^* \right) T_q^* h, k\right\rangle \\
    &= \left\langle r^{|p|+|q|} T_p T_q^* h, k\right\rangle
\end{align*}
Here, the last equality follows from Lemma~\ref{lm.standard.tech}. 

By Lemma~\ref{lm.PoissonKernel}, $K_{r,T}$ is an isometry and thus $P_{r,T}$ is completely contractive and linear. Take $p=q=1$, $P_{r,T}(I)=I$, so $P_{r,T}$ is also unital. 
\end{proof}

As a corollary, one has the following version of von Neumann inequality.

\begin{corollary}\label{cor.vN} Let $\{T_j\}$ on $\bh{H}$ be a $\Gamma$-family that satisfies the weak Brehmer's condition and the property (P). Let $a=\sum a_{p,q} T_p T_q^*\in\lspan\{T_pT_q^*: p,q\in A_\Gamma^+\}$. Then \[\|a\|\leq \|\sum a_{p,q} \lambda_p \lambda_q^*\|.\]
\end{corollary}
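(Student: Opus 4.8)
The plan is to realize $a$ as a norm-limit of images under the Poisson transforms $P_{r,T}$ applied to a single fixed element of $C^*_\lambda(A_\Gamma^+)$, and then use that each $P_{r,T}$ is a contraction. Set $b := \sum a_{p,q}\lambda_p\lambda_q^* \in C^*_\lambda(A_\Gamma^+)$, the element appearing on the right-hand side of the inequality.

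First I would fix $r$ with $\rho < r < 1$, where $\rho$ is the constant supplied by property (P). By Proposition~\ref{prop.Pr}(1) together with the linearity of $P_{r,T}$,
\[P_{r,T}(b) = \sum a_{p,q}\, P_{r,T}(\lambda_p\lambda_q^*) = \sum a_{p,q}\, r^{|p|+|q|}\, T_p T_q^* =: a_r.\]
By Proposition~\ref{prop.Pr}(2), $P_{r,T}$ is unital and completely contractive, hence in particular norm-contractive, so $\|a_r\| = \|P_{r,T}(b)\| \le \|b\|$ for every $r\in(\rho,1)$.

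Second, I would let $r \to 1^-$. The sum defining $a_r$ has only finitely many terms (since $a$ lies in the linear span), and each summand $a_{p,q}\, r^{|p|+|q|}\, T_p T_q^*$ converges in operator norm to $a_{p,q}\, T_p T_q^*$ as $r\to 1$. Therefore $a_r \to a$ in norm, and consequently $\|a\| = \lim_{r\to 1^-}\|a_r\| \le \|b\| = \big\|\sum a_{p,q}\lambda_p\lambda_q^*\big\|$, which is the asserted von Neumann inequality.

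I do not expect a genuine obstacle here: all the substance is carried by Proposition~\ref{prop.Pr}, and the passage $r\to1$ is harmless precisely because only finitely many terms occur. The single point requiring care is that $P_{r,T}$ is defined only for $r$ in the interval $(\rho,1)$, so the limit must be taken along that interval; this is exactly the role of property (P), which guarantees that such an interval is nonempty.
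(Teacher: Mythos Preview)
Your argument is correct and is essentially identical to the paper's own proof: apply Proposition~\ref{prop.Pr} to obtain $\|\sum a_{p,q} r^{|p|+|q|} T_p T_q^*\| \le \|\sum a_{p,q}\lambda_p\lambda_q^*\|$ for each $r\in(\rho,1)$, then let $r\to 1$ using that the sum is finite. Your write-up simply spells out the steps in more detail, including the observation that property~(P) is what ensures the interval $(\rho,1)$ is nonempty.
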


\begin{proof} By Proposition~\ref{prop.Pr}, 
\[\|\sum a_{p,q} r^{|p|+|q|} T_p T_q^*\| \leq \|\sum a_{p,q} \lambda_p \lambda_q^*\|.\]
Since this is a finite sum, taking $r\to 1$ gives the desired result.
\end{proof}

We now prove that the limit of $P_{r,T}$ as $r\to 1^-$ is well-defined on $C_\lambda^*(A_\Gamma^+)$. 

\begin{proposition}
    For each $a\in C_\lambda^*(A_\Gamma^+)$, the limit
\[\lim_{r\to 1^-} P_{r,T}(a),\]
exists in the norm topology.
\end{proposition}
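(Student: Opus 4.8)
The plan is to show that $\{P_{r,T}(a)\}$ is Cauchy as $r\to 1^-$ for a dense subset of $C^*_\lambda(A_\Gamma^+)$, and then upgrade to all of $C^*_\lambda(A_\Gamma^+)$ using the uniform bound $\|P_{r,T}\|\leq 1$ from Proposition~\ref{prop.Pr}. The natural dense subset is $\lspan\{\lambda_p\lambda_q^*: p,q\in A_\Gamma^+\}$, which is dense in $C^*_\lambda(A_\Gamma^+)$ as noted in Section~3.

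First I would treat a single monomial $\lambda_p\lambda_q^*$. By Proposition~\ref{prop.Pr}(1), $P_{r,T}(\lambda_p\lambda_q^*) = r^{|p|+|q|} T_pT_q^*$, which visibly converges in norm to $T_pT_q^*$ as $r\to 1^-$, since $r^{|p|+|q|}\to 1$ and $T_pT_q^*$ is a fixed bounded operator. By linearity, for any finite linear combination $a=\sum_{p,q} a_{p,q}\lambda_p\lambda_q^*$ we get $P_{r,T}(a) = \sum_{p,q} a_{p,q} r^{|p|+|q|} T_pT_q^* \to \sum_{p,q} a_{p,q} T_pT_q^*$ in norm; call this limit $\Phi(a)$. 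In particular the limit exists on the dense subspace.

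Next I would pass to the closure by an $\varepsilon/3$ argument. Fix $a\in C^*_\lambda(A_\Gamma^+)$ and $\varepsilon>0$. Choose $b\in\lspan\{\lambda_p\lambda_q^*\}$ with $\|a-b\|<\varepsilon/3$. Then for $r,r'$ close to $1$,
\[
\|P_{r,T}(a)-P_{r',T}(a)\| \leq \|P_{r,T}(a-b)\| + \|P_{r,T}(b)-P_{r',T}(b)\| + \|P_{r',T}(b-a)\|.
\]
The first and third terms are each $<\varepsilon/3$ since each $P_{r,T}$ is contractive, and the middle term is $<\varepsilon/3$ once $r,r'$ are sufficiently close to $1$ because $P_{\cdot,T}(b)$ converges as shown above. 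Hence $\{P_{r,T}(a)\}$ is Cauchy in the norm topology of $\bh{H}$, which is complete, so the limit exists. (Implicitly one notes $P_{r,T}(a)$ is defined for all $\rho<r<1$, so the limit as $r\to 1^-$ makes sense along this range.)

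The only mild obstacle is bookkeeping: making sure the contractivity of $P_{r,T}$ is available uniformly in $r$ (it is, by Proposition~\ref{prop.Pr}(2), valid for all $\rho<r<1$) and that the dense subspace is genuinely dense (established in Section~3, $C^*_\lambda(A_\Gamma^+)=\overline{\lspan}\{\lambda_p\lambda_q^*\}$). No deeper estimate is needed; the convergence on monomials is immediate and the extension is the standard density argument. I would also remark that the limiting map $\Phi$ is automatically unital and completely contractive, being a pointwise-norm limit of such maps, which is exactly what is needed for the subsequent construction of the Poisson transform and the Stinespring dilation.
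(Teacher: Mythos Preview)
Your proposal is correct and follows essentially the same approach as the paper: both use Proposition~\ref{prop.Pr}(1) to see that $P_{r,T}$ converges on finite linear combinations of $\lambda_p\lambda_q^*$, and then run an $\varepsilon/3$ argument exploiting the uniform contractivity of $P_{r,T}$ and the density of $\lspan\{\lambda_p\lambda_q^*\}$ in $C^*_\lambda(A_\Gamma^+)$. The only differences are cosmetic (you organize the monomial step separately and add the remark about the limit map being unital completely contractive), not substantive.
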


\begin{proof}
Fix $\epsilon>0$.  Since $\lspan\{\lambda_p\lambda_q^*: p,q\in A_\Gamma^+\}$ is dense in $C^*_\lambda(A_\Gamma^+)$, there exists a finite sum $a'=\sum a_{p,q} \lambda_p \lambda_q^*\in\lspan\{\lambda_p\lambda_q^*: p,q\in A_\Gamma^+\}$ such that $\|a-a'\|<\epsilon/3$. Since $P_{r,T}$ is contractive,
$\|P_{r,T}(a)-P_{r,T}(a')\|<\|a-a'\|<\epsilon/3$ for all $0\leq r<1$. Consider $P_{r,T}(a')=\sum a_{p,q} r^{|p|+|q|} T_p T_q^*$. Since this is a finite sum, $\lim_{r\to 1^-} P_{r,T}(a')$ exists. Therefore, there exists $r_0$ such that for all $r_0<r_1,r_2<1$, $\|P_{r_1,T}(a')-P_{r_2,T}(a')\|<\epsilon/3$. 
Therefore, for  all $r_0<r_1,r_2<1$,
\[
    \|P_{r_1,T}(a)-P_{r_2,T}(a)\| \leq \|P_{r_1,T}(a)-P_{r_1,T}(a')\| + \|P_{r_1,T}(a')-P_{r_2,T}(a')\| + \|P_{r_2,T}(a)-P_{r_2,T}(a')\| <\epsilon.
\]
This implies $\lim_{r\to 1^-} P_{r,T}(a)$ exists in the norm topology.
\end{proof}
The \emph{Poisson transformation} is defined as $\Phi_T:C_\lambda^*(A_\Gamma^+) \to \bh{H}$ by
\[\Phi_T(a) := \lim_{r\to 1^-} P_{r,T}(a).\]

\begin{theorem}\label{thm.poisson.main} Let $\{T_j\}$ on $\bh{H}$ be a $\Gamma$-family that satisfies the weak Brehmer's condition and the property (P). Then the Poisson transform $\Phi_T:C_\lambda^*(A_\Gamma^+) \to \bh{H}$ satisfies the following properties.
\begin{enumerate}[label=\textup{(\arabic*)}]
    \item\label{item.Poisson1} For each $p,q\in A_\Gamma^+$, $\Phi_T(\lambda_p\lambda_q^*)=T_pT_q^*$.
    \item\label{item.Poisson2} $\Phi_T$ is a unital completely contractive map.
    \item\label{item.Poisson3} $\Phi_T(\lambda_p)\Phi_T(\lambda_q)=\Phi_T(\lambda_{pq})$ for all $p,q\in A_\Gamma^+$. 
\end{enumerate}
\end{theorem}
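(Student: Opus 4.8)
The three properties follow by combining Proposition~\ref{prop.Pr} with the fact that $\Phi_T$ is a norm limit of the maps $P_{r,T}$.

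First I would establish \ref{item.Poisson1}. For fixed $p,q\in A_\Gamma^+$, Proposition~\ref{prop.Pr}(1) gives $P_{r,T}(\lambda_p\lambda_q^*)=r^{|p|+|q|}T_pT_q^*$ for all $\rho<r<1$. Letting $r\to 1^-$, the scalar $r^{|p|+|q|}\to 1$, so $\Phi_T(\lambda_p\lambda_q^*)=\lim_{r\to 1^-}r^{|p|+|q|}T_pT_q^*=T_pT_q^*$.

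Next, \ref{item.Poisson2}: each $P_{r,T}$ is unital and completely contractive by Proposition~\ref{prop.Pr}(2). Unitality passes to the limit immediately since $P_{r,T}(I)=I$ for every $r$. For complete contractivity, fix $m$ and a matrix $[a_{ij}]\in M_m(C^*_\lambda(A_\Gamma^+))$; then $\|[\Phi_T(a_{ij})]\|=\lim_{r\to 1^-}\|[P_{r,T}(a_{ij})]\|\le\|[a_{ij}]\|$, using that matrix amplifications of a norm-convergent net converge in norm (the amplified maps $P_{r,T}^{(m)}$ converge pointwise in norm to $\Phi_T^{(m)}$, and $\|P_{r,T}^{(m)}\|\le 1$).

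Finally, \ref{item.Poisson3} is the substantive point. By \ref{item.Poisson1} with $q=1$ we have $\Phi_T(\lambda_p)=T_p$ for every $p$, and likewise $\Phi_T(\lambda_{pq})=T_{pq}=T_pT_q$ since $T$ is a representation of $A_\Gamma^+$. Hence $\Phi_T(\lambda_p)\Phi_T(\lambda_q)=T_pT_q=T_{pq}=\Phi_T(\lambda_{pq})$. The only thing to check is that $\lambda_{pq}=\lambda_p\lambda_q$ in $C^*_\lambda(A_\Gamma^+)$, which is immediate since $\lambda$ is a representation of the monoid; then $\Phi_T(\lambda_{pq})=\Phi_T(\lambda_p\lambda_q^*)$ evaluated with the second index equal to $1$ is covered by \ref{item.Poisson1}. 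I expect no genuine obstacle here: the content was already packed into Proposition~\ref{prop.Pr} and the existence of the limit, so the main (mild) care needed is the matrix-norm argument in \ref{item.Poisson2}, ensuring the estimate survives the limit uniformly in $m$.
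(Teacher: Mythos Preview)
Your proposal is correct and mirrors the paper's own proof, which simply says that \ref{item.Poisson1} and \ref{item.Poisson2} follow immediately from Proposition~\ref{prop.Pr} and that \ref{item.Poisson3} follows from \ref{item.Poisson1}. You have just spelled out the straightforward limiting arguments that the paper leaves implicit.
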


\begin{proof} \ref{item.Poisson1} and \ref{item.Poisson2} follow immediately from Proposition~\ref{prop.Pr} and \ref{item.Poisson3} follows from \ref{item.Poisson1}.
\end{proof}

\begin{corollary}\label{cor.star.regular} Let $\{T_j\}$ on $\bh{H}$ be a $\Gamma$-family. Then $\{T_j\}$ has a $\ast$-regular dilation if and only if it satisfies the weak Brehmer's condition and the property (P). 
\end{corollary}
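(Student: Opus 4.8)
The plan is to prove the two implications separately. For ``$\Leftarrow$'' the Poisson transform does the work; for ``$\Rightarrow$'' everything reduces to Theorem~\ref{thm.regular.LCM} together with the structure of a minimal isometric Nica-covariant dilation.

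Assume first that $\{T_j\}$ satisfies the weak Brehmer's condition and property (P). Then the Poisson transform $\Phi_T : C^*_\lambda(A_\Gamma^+) \to \bh{H}$ of Theorem~\ref{thm.poisson.main} is defined; being unital and completely contractive on a unital C*-algebra, it is completely positive, so Stinespring's dilation theorem yields a Hilbert space $\mathcal{K}\supseteq\mathcal{H}$ and a unital $*$-representation $\pi : C^*_\lambda(A_\Gamma^+)\to\bh{K}$ with $\Phi_T(a)=P_\mathcal{H}\,\pi(a)|_\mathcal{H}$. Set $V_p:=\pi(\lambda_p)$; since $\pi$ is a unital $*$-homomorphism and $p\mapsto\lambda_p$ is an isometric representation of $A_\Gamma^+$, the family $\{V_p\}$ is an isometric representation of $A_\Gamma^+$ on $\mathcal{K}$. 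For $p,q\in A_\Gamma^+$ we then have $P_\mathcal{H}V_p^*V_q|_\mathcal{H}=\Phi_T(\lambda_p^*\lambda_q)$, and since $\lambda_p^*\lambda_q$ equals $\lambda_{p^{-1}r}\lambda_{q^{-1}r}^*$ when $p\vee q=r\neq\infty$ and $0$ otherwise, part~\ref{item.Poisson1} of Theorem~\ref{thm.poisson.main} gives $P_\mathcal{H}V_p^*V_q|_\mathcal{H}=T_{p^{-1}r}T_{q^{-1}r}^*$ (respectively $0$). Hence $V$ is a $*$-regular dilation of $T$.

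Conversely, assume $T$ has a $*$-regular dilation. By Theorem~\ref{thm.regular.LCM} this is equivalent to Condition~\ref{cond.Brehmer.Artin} and also supplies a minimal isometric Nica-covariant dilation $V$ of $T$ on some $\mathcal{K}\supseteq\mathcal{H}$. For the weak Brehmer's condition, recall $A_\Gamma^+=\prod_i A_{\Gamma_i}^+$ with joins computed coordinatewise, so for $p,q\in A_{\Gamma_i}^+$ the join $p\vee q$, when finite, lies in $A_{\Gamma_i}^+$ and coincides with the join taken in $A_\Gamma^+$; thus the restriction of a $*$-regular dilation of $T$ to $A_{\Gamma_i}^+$ is a $*$-regular dilation of the $\Gamma_i$-family $\{T_j:j\in\Gamma_i\}$, and Theorem~\ref{thm.regular.LCM} applied to $\Gamma_i$ shows it satisfies Condition~\ref{cond.Brehmer.Artin}. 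As $i$ is arbitrary, $\{T_j\}$ satisfies the weak Brehmer's condition. For property (P), I would first use minimality to see that $\mathcal{H}$ is co-invariant for $V$: for $h\in\mathcal{H}$, $p\in A_\Gamma^+$, and arbitrary $q\in A_\Gamma^+$, $g\in\mathcal{H}$, one has $\langle V_p^*h,V_qg\rangle=\langle h,V_{pq}g\rangle=\langle T_{pq}^*h,g\rangle=\langle T_p^*h,V_qg\rangle$, whence $V_p^*h=T_p^*h\in\mathcal{H}$, and in particular $P_\mathcal{H}V_pV_p^*|_\mathcal{H}=T_pT_p^*$ for every $p$. Next, since $V$ is isometric and Nica-covariant, the range projections $Q_i:=V_iV_i^*$ pairwise commute ($Q_iQ_j=0=Q_jQ_i$ when $ij\notin E$, and commutation follows from the Nica and commutation relations when $ij\in E$), and the standard Nica-covariance bookkeeping gives $V_{\vee U}V_{\vee U}^*=\prod_{i\in U}Q_i$ for every clique $U\subseteq F=\{e_1,\dots,e_n\}$, both sides vanishing otherwise. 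Consequently
\[
\sum_{U\subseteq F}(-1)^{|U|}r^{2|U|}\,V_{\vee U}V_{\vee U}^* \;=\; \prod_{i=1}^{n}\bigl(I-r^2Q_i\bigr)\;\geq\;0 \qquad (0\le r<1),
\]
being a product of pairwise-commuting positive operators; compressing to $\mathcal{H}$ and using $P_\mathcal{H}V_{\vee U}V_{\vee U}^*|_\mathcal{H}=T_{\vee U}T_{\vee U}^*$ gives $\Delta_{r,T}\geq0$ for all $0\le r<1$, so $\{T_j\}$ has property (P).

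The step I expect to be the main obstacle is the derivation of property (P): one must recognize $\Delta_{r,T}$ as the compression of a manifestly positive operator, and the delicate point is that this forces one to pass from the compression of $V_p^*V_q$ — which the $*$-regular hypothesis controls directly — to that of $V_pV_q^*$, precisely where co-invariance of $\mathcal{H}$ in the \emph{minimal} dilation is essential, together with the standard but not entirely obvious fact that the range projections of an isometric Nica-covariant family commute, which is what makes $\sum_{U\subseteq F}(-1)^{|U|}r^{2|U|}V_{\vee U}V_{\vee U}^*$ factor as a product.
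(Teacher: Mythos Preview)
Your proof is correct, and the ``$\Leftarrow$'' direction is essentially identical to the paper's (Poisson transform, Stinespring, then check the $*$-regular identity via $\lambda_p^*\lambda_q$).

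For ``$\Rightarrow$'' you take a genuinely different route. The paper dispatches this direction in two lines: it cites \cite{Li2017} for the implication \emph{$*$-regular dilation $\Rightarrow$ property (P)}, and observes that Condition~\ref{cond.Brehmer.Artin} (equivalent to $*$-regular dilation by Theorem~\ref{thm.regular.LCM}, and in turn equivalent to Condition~\ref{cond.Brehmer}) is stronger than the weak Brehmer condition. You instead give a self-contained argument: for weak Brehmer you restrict the $*$-regular dilation to each factor $A_{\Gamma_i}^+$ and reapply Theorem~\ref{thm.regular.LCM}; for property~(P) you establish co-invariance of $\mathcal{H}$ in the minimal Nica-covariant dilation, verify that the range projections $Q_i=V_iV_i^*$ commute, and then factor $\sum_{U}(-1)^{|U|}r^{2|U|}V_{\vee U}V_{\vee U}^*=\prod_i(I-r^2Q_i)\geq 0$ before compressing back. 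Your approach buys independence from \cite{Li2017} and makes the mechanism behind property~(P) transparent; the paper's approach is shorter and avoids re-deriving facts already in the literature. Both are valid; note also that your weak-Brehmer step could be shortened to the paper's observation that Condition~\ref{cond.Brehmer} applied to finite $F\subset A_{\Gamma_i}^+\subset A_\Gamma^+$ immediately gives the required positivity.
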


\begin{proof}
For $\Rightarrow$: it was shown in \cite{Li2017} that having $\ast$-regular dilation implies property (P). It is also equivalent to Brehmer's condition (Condition~\ref{cond.Brehmer.Artin}), which is stronger than the weak Brehmer's condition. 

For $\Leftarrow$: let $\Phi_T:C_\lambda^*(A_\Gamma^+)\to\bh{H}$ be the Poisson transform, which is a unital completely positive map as proved in Theorem~\ref{thm.poisson.main}. The Stinespring's dilation theorem states that $\Phi_T$ must be a compression of a $*$-homomorphism. In other words, there exists a $*$-homomorphism $\pi: C^*_\lambda(A_\Gamma^+)\to\bh{K}$ for some Hilbert space $\cK\supset\cH$ such that for all $a\in C^*_\lambda(A_\Gamma^+)$,
\[\Phi_T(a) = P_\cH \pi(a)|_\cH.\]
Let $V_p=\pi(\lambda_p)\in \bh{K}$. Since $\pi$ is a $*$-homomorphism, $V:A_\Gamma^+\to\bh{K}$ by $V(p)=V_p$ is an isometric representation. Moreover,
for each $p,q\in P$,
\[P_\cH V_p^* V_q|_H = \Phi_T(\lambda_p^* \lambda_q) = \begin{cases}
    \Phi_T(\lambda_{p^{-1}r} \lambda_{q^{-1}r}^*), &\text{if } p\vee q=r,\\
    0, &\text{otherwise}. 
    \end{cases}= \begin{cases}
    T_{p^{-1}r} T_{q^{-1}r}^*, &\text{if } p\vee q=r,\\
    0, &\text{otherwise}. 
    \end{cases} \]
Therefore, $V$ is a $*$-regular dilation of $T$.
\end{proof}


\end{document}